\newtheorem{theorem}{Theorem}%
\newtheorem{proposition}[theorem]{Proposition}%
\newtheorem{remark}{Remark}%
\newtheorem{definition}{Definition}%
\newtheorem{lemma}{Lemma}
\newtheorem{corollary}[theorem]{Corollary}
\theoremstyle{remark}
\begin{document}

\title[A notion of fractional slice monogenic functions with respect to a pair of real valued functions]{A notion of fractional slice monogenic functions with respect to a pair of real valued functions}

\author[1]{\fnm{Jos\'e Oscar} \sur{Gonz\'alez Cervantes}}\email{jogc200678@gmail.com}

\author*[2]{\fnm{Juan} \sur{Bory Reyes}}\email{juanboryreyes@yahoo.com}

\affil*[1]{\orgdiv{Departamento de Matem\'aticas}, \orgname{ESFM-Instituto Polit\'ecnico Nacional}, \orgaddress{\street{Unidad Profesional ``Adolfo L\'opez Mateos", Col. Lindavista,}, \city{Ciudad M\'exico}, \postcode{07338}, \country{M\'exico}}}

\affil[2]{\orgdiv{SEPI}, \orgname{ESIME-Zacatenco-Instituto Polit\'ecnico Nacional}, \orgaddress{\street{Unidad Profesional ``Adolfo L\'opez Mateos", Col. Lindavista,}, \city{Ciudad M\'exico}, \postcode{07338}, \country{M\'exico}}}

\abstract{This work presents  the basic elements and results of a Clifford algebra valued fractional slice monogenic functions theory  defined from the  null-solutions of a suitably fractional Cauchy-Riemann operator in the Riemann-Liouville and Caputo sense with respect to a pair of real valued functions on certain domains of Euclidean spaces.}

\keywords{Slice monogenic functions, Clifford algebras, Fractional calculus}

\pacs[MSC Classification]{26A33, 30G30, 30G35, 32A30, 45P05.}

\maketitle

\section{Introduction} 
Slice monogenic functions are defined on domains of Euclidean spaces having values in a Clifford algebra. The literature is very rich of results and the studies on the topic are ongoing, see for instance \cite{ACDS, CGS, CDR, CKPS, CSS1, CSS2, CSS3, CS1, CS2, YQ, XS}.

Fractional  integrals and derivatives are mathematical tools widely used in several branches of science and engineering. Fractional calculus, which addresses the derivatives and integrals with arbitrary real or complex order, is nowadays an extensively developed topic which the reader can approach in the classical references \cite{KST, JAA, OS, O, P, MR, SKM}.

Clifford analysis is a function theory studying null solutions of the Cauchy-Riemann or Dirac systems, called monogenic functions, which are defined on domains of Euclidean spaces and with values in a Clifford algebra. Standard reference books are \cite{BDS, CSSS, GM, GHS}.

In recent years, various extension of the Fractional calculus into Clifford analysis (in particular, into quaternionic analysis) has attracted more and more attention in the literature. Some examples of effective establishments can be found in \cite{CTOP, DM, GB1, GB2, GB3, GBS, KV, PBBB, V}. 

Inspired by \cite{GBS}, where the notion of fractional slice regular functions of a quaternionic variable defined as null-solutions of a fractional Cauchy-Riemann operator is introduced, we further generalize these ideas into twofold directions: one is a notion of fractional slice monogenic function theory and the other is the study of null-solutions of a fractional Cauchy-Riemann operator with respect to two real valued functions, which can be exponential or lineal functions. In any case we expand the theory presented in \cite{GBS}.

\section{Preliminaries}
In this section, we collect some preliminary results on Fractional calculus and on Slice monogenic functions theory.
\subsection{Fractional integral and derivative in Riemann-Liouville and Caputo sense with respect to another function}

Suppose that $-\infty <a  < b< \infty$ and  $\alpha\in \mathbb C$ with $\Re \alpha> 0$.  Let  $g\in C^1([a,b], \mathbb R)$ be such that  $g'(t)>0$ for all $t\in [a,b]$. 
Let us recall that the Riemann-Liouville integrals of order $\alpha$ of   $f  \in L^1([a, b], \mathbb R)$, with respect to $g$, on the left and on the right, are defined  by 
$$({\bf I}_{a^+}^{\alpha, g} f)(x) := \frac{1}{\Gamma(\alpha)} \int_a^x \frac{f(\tau)}{(g(x)- g(\tau))^{1-\alpha}} g'(\tau) d\tau, \quad \textrm{with}  \quad x > a$$
and
$$({\bf I}_{b^-}^{\alpha,g} f)(x) := \frac{1}{\Gamma(\alpha)} \int_x^b \frac{f(\tau)}{(g(\tau)- g(x))^{1-\alpha}} g'(\tau)d\tau, \quad \textrm{with}  \quad x < b,$$
respectively. 

The fractional derivatives in the Riemann-Liouville sense, on the left and on the right, with respect to $g$, are defined by 
\begin{align}\label{FracDer} 
	({}_{RL}D _{a^+}^{\alpha, g } f)(x):= \frac{1}{ g'(x)} \frac{d}{  dx} \left[ ({\bf I}_{a^+}^{1-\alpha, g} f)(x)\right]
\end{align}
and
\begin{align} \label{FracDer1}
	({}_{RL}D _{b^-}^{\alpha, g} f)(x):= \frac{1}{ g'(x)} \frac{d}{   dx}\left[({\bf I}_{b^-}^{1-\alpha, g}f)(x)\right] 
\end{align}
respectively. It is worth noting that the derivatives in  \eqref{FracDer} and \eqref{FracDer1} exist for $f\in AC^1([a, b], \mathbb R)$. Fractional Riemann-Liouville integral and derivative are linear operators.

Fundamental theorem for Riemann-Liouville fractional calculus \cite{JAA, JARH} shows that 
\begin{align}\label{FundTheorem}
	({}_{RL}D_{a^+}^{\alpha, g} {\bf I}_{a^+}^{\alpha, g}f)(x)=f(x) \quad  \textrm{and} \quad ({}_{RL}D _{b^-}^{\alpha, g}  {\bf I}_{b^-}^{\alpha, g} f)(x) = f(x).
\end{align}

From \cite[Definition 2.7]{JARH} we see that given $f  \in C^1([a, b], \mathbb R)$, the fractional derivatives in the Caputo  sense, on the left and on the right, with respect to $g$,  are defined by 
\begin{align}\label{FracDerCaputo} 
	({}_C D _{a^+}^{\alpha, g } f)(x):= & ({\bf I}_{a^+}^{1-\alpha, g}     \frac{f' }{ g' }  )     (x) , \nonumber\\ 
	({}_C D _{b^-}^{\alpha, g} f)(x):= &  ({\bf I}_{b^-}^{1-\alpha, g} \frac{f'}{g'})(x), 
\end{align}
respectively. 

Finally, if $f \in C^1([a, b], \mathbb R)$ we have 
\begin{align}\label{R_LandC1}
	({}_C D _{a^+}^{\alpha, g } ({\bf I}_{a^+}^{1-\alpha, g} f))(x):= & ({\bf I}_{a^+}^{1-\alpha, g} \frac{1 }{g'} \frac{d}{dx} ({\bf I}_{a^+}^{1-\alpha, g} f))(x) = ({\bf I}_{a^+}^{1-\alpha, g} {}_{RL} D _{a^+}^{\alpha, g } f) (x),
\end{align}
for all $x\in [a,b]$.

Similarly, 
\begin{align}\label{R_LandC2} 
	({}_C D _{b^-}^{\alpha, g} ({\bf I}_{a^+}^{1-\alpha, g} f))(x):=  & ({\bf I}_{b^-}^{1-\alpha, g} \frac{1}{g'} {}_{RL}D _{b^-}^{\alpha, g} f )(x),
\end{align}
for all $x\in [a,b]$. 

\subsection{Slice monogenic functions}
We first give some basic knowledge in relation to Clifford algebra \cite{BDS, DSS}. Let $\{e_1,\dots , e_n\}$  imaginary units satisfying $e_ie_j +e_je_i = -2\delta_{ij}$.  Clifford algebra $\mathbb R_n $ is formed by $\sum_{A} e_Ax_A$, where $A = \{i_1,\dots, i_r\}$, $ i_{\ell}\in \{1, 2, . . . , n\}$ and  $i_1 < \cdots  < i_r$ is a multi-index, $e_A = e_{i_1}e_{i_2} \cdots e_{i_r}$ and $e_{\emptyset}  = 1$,  $x_A\in\mathbb  R$. An element $(x_1,\dots , x_n) \in \mathbb R_n$ is identified with a 1-vector in $\mathbb R_n$  through the map  
$$(x_1, \dots , x_n) \mapsto  \underline{x} = x_1e_1 +\cdot + x_ne_n.$$ 

In addition, $(x_0, x_1,\dots , x_n) \in\mathbb  R^{n+1}$  will be identified with $x = x_0 +\underline{ x }= x_0 + \sum_{j=1}^n x_je_j$, which is called a paravector. The norm of $x \in \mathbb R^{n+1}$  is 
$|x|^2 = x^2_0 + x^2_1 +\cdots + x^2_n$. The real part $x_0$ of $x$ is  written as $Re[x]$. Given $x\in \mathbb R^{n+1}$ and $r>0$ denote 
$\mathbb D(x,r):=\{y\in\mathbb R^{n+1} \ \mid \ |x-y|<r\}$.

The sphere of unit 1-vectors in $\mathbb R^n$ will be given by $\mathbb S = \{\underline{x} = e_1x_1 + \cdots  + e_nx_n \ \mid \  x^2_1
+ \cdots + x^2_n = 1\}$ and given $\mathcal I \in \mathbb S$ we define 
$\mathbb C_{\mathcal I}:=\{ u+\mathcal  I v  \ \mid \  u, v\in \mathbb  R\},$ which is a 2-dimensional real subspace of $\mathbb R^{n+1}$ isomorphic to $\mathbb C$ as fields. 

In addition, given a paravector $x = x_0 +\underline{ x} \in  \mathbb R^{n+1}$ denote 
\begin{align}\label{unitvector}  
	\mathcal I_x =\left\{
	\begin{array}{l }\frac{\underline{x}}{|\underline{x}|} \quad  if \ \  \underline{x} \neq  0,\\
		\textrm{any element of $\mathbb S$ otherwise.}
	\end{array}
	\right.
\end{align}
Therefore, $\mathbb R^{n+1} =\bigcup_{ \mathcal I \in\mathbb S}  \mathbb C_{\mathcal I}$.  
\begin{definition} (Slice monogenic functions) \cite{CSS1, CSS3}. Suppose that $U \subset \mathbb R^{n+1}$ be an open set and let
	$f: U \to \mathbb R_n$ be a real differentiable function. Let $\mathcal I \in \mathbb  S$  and let $f_\mathcal I$ be the restriction
	of $f$ to $U_{\mathcal I}:=\mathbb C_\mathcal I \cap U$. Therefore, $f$ is called a (left) slice monogenic function,
	or s-monogenic function, if 
	\begin{align*}
		\overline{\partial}_{{\mathcal  I}}  f _{\mathcal I} (u + \mathcal I v) : =  \frac{1}{2}
		\left( \frac{\partial}{\partial u} + \mathcal I \frac{\partial }{\partial  v}\right)
		f_{\mathcal I} (u + \mathcal I v) = 0, \quad \textrm{on }U_{\mathcal I},
	\end{align*}
	for every $\mathcal I \in \mathbb S$. 
\end{definition}
In what follows, $\mathcal {SM}(U)$ stands for the  set of slice monogenic functions defined on $U$.
\begin{definition}(Slice domains). A domain  $U \subset \mathbb R^{n+1}$  is called slice domain (s-domain for short) if $ U \cap R$  is nonempty and if $ U_{ \mathcal I}$ is a domain in $\mathbb C_{\mathcal I}$ for all $\mathcal I \in \mathbb S$.
\end{definition}

\begin{definition} (Axially symmetric domains). A domain $ U \subset \mathbb  R^{n+1}$ is called 
	axially symmetric if, for all $x = x_0+\mathcal I_x 
	|\underline{x}|		 \in  U$ we have that 
	$\{x_0+ \mathcal I  |\underline{x}| \}\subset U$ for all $\mathcal I\in \mathbb S^2$.
\end{definition}

\begin{theorem} \label{RepresentationFormula}(Representation formula) \cite{CSS1, CSS3}. Let $U \subset \mathbb R^{n+1}$ be an axially symmetric s-domain. If $f\in \mathcal {SM}(U)$ and  $x = u + \mathcal I_x v \in  U$, where $u,v\in \mathbb R$, then  
	\begin{align*}
		f(x) = \frac{1}{2} \left(1 - \mathcal I_x \mathcal I \right) f(u + \mathcal Iv) + \frac{1}{2} \left(1 + \mathcal I_x \mathcal I \right) 	
		f(u- \mathcal  Iv), \quad \forall \mathcal I \in  \mathbb S.
	\end{align*}
\end{theorem}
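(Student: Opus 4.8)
The plan is to reduce the identity to a statement about the restriction $f_{\mathcal{J}}$ on a single slice $\mathbb{C}_{\mathcal{J}}$ and then exploit the slice-monogenicity equation together with the fact that two points $u+\mathcal{I}v$ and $u-\mathcal{I}v$ lie on the \emph{same} slice $\mathbb{C}_{\mathcal{I}}$. First I would fix $x = u+\mathcal{I}_x v \in U$ and an arbitrary $\mathcal{I}\in\mathbb{S}$, and introduce the candidate function
\begin{align*}
	\Phi(u+\mathcal{I}_x v) := \frac{1}{2}\left(1-\mathcal{I}_x\mathcal{I}\right) f(u+\mathcal{I}v) + \frac{1}{2}\left(1+\mathcal{I}_x\mathcal{I}\right) f(u-\mathcal{I}v),
\end{align*}
viewed as a function of the two real parameters $u,v$. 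The goal is to show $\Phi = f$ on all of $U$. The key observation is that since $U$ is an axially symmetric s-domain, both $u+\mathcal{I}v$ and $u-\mathcal{I}v$ belong to $U_{\mathcal{I}} = \mathbb{C}_{\mathcal{I}}\cap U$, so $f(u+\mathcal{I}v) = f_{\mathcal{I}}(u+\mathcal{I}v)$ and $f(u-\mathcal{I}v) = f_{\mathcal{I}}(u-\mathcal{I}v)$ are governed by the equation $\overline{\partial}_{\mathcal{I}} f_{\mathcal{I}} = 0$.

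Next I would verify the two defining properties that characterize $f$. \emph{Step one: boundary/diagonal agreement.} On the slice $\mathbb{C}_{\mathcal{I}}$ itself, i.e.\ when $\mathcal{I}_x = \mathcal{I}$, the coefficients collapse: $\tfrac12(1-\mathcal{I}\mathcal{I}) = \tfrac12(1+1) = 1$ and $\tfrac12(1+\mathcal{I}\mathcal{I}) = 0$, so $\Phi(u+\mathcal{I}v) = f(u+\mathcal{I}v)$. Thus $\Phi$ agrees with $f$ on $U_{\mathcal{I}}$, and in particular on $U\cap\mathbb{R}$ (taking $v=0$, where also $\mathcal{I}_x$ is irrelevant since $\underline{x}=0$). \emph{Step two: $\Phi$ is slice monogenic.} I would compute $\overline{\partial}_{\mathcal{I}_x}\Phi$ on the slice $\mathbb{C}_{\mathcal{I}_x}$. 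Writing $\partial_u$ and $\partial_v$ of $\Phi$, the $u$-derivatives pass directly onto $f$, while the $v$-derivative of $f(u+\mathcal{I}v)$ produces $\mathcal{I}\,(\partial_v f_{\mathcal{I}})$ and that of $f(u-\mathcal{I}v)$ produces $-\mathcal{I}\,(\partial_v f_{\mathcal{I}})$. Using $\overline{\partial}_{\mathcal{I}} f_{\mathcal{I}} = 0$, i.e.\ $\partial_u f_{\mathcal{I}}(u\pm\mathcal{I}v) = \mp\,\mathcal{I}\,\partial_v f_{\mathcal{I}}(u\pm\mathcal{I}v)$, one substitutes and finds that the $\mathcal{I}_x$-terms cancel against the $\mathcal{I}$-terms precisely because of the idempotent-like identities $\mathcal{I}_x(1\pm\mathcal{I}_x\mathcal{I}) = \pm(1\pm\mathcal{I}_x\mathcal{I})\mathcal{I}_x$ combined with $\mathcal{I}_x\mathcal{I}_x = -1$; after the dust settles $\tfrac12(\partial_u + \mathcal{I}_x\partial_v)\Phi = 0$.

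\emph{Step three: identity principle.} Having shown that $\Phi\in\mathcal{SM}(U)$ and that $\Phi - f$ vanishes on $U\cap\mathbb{R}$ (a set with accumulation points on every slice, since $U$ is an s-domain), I would invoke the identity principle for slice monogenic functions on s-domains to conclude $\Phi \equiv f$ on $U$. Alternatively, and perhaps more cleanly, one observes that $\Phi - f$ restricted to each $\mathbb{C}_{\mathcal{I}}$ is a holomorphic-type function vanishing on $U_{\mathcal{I}}\cap\mathbb{R}$, hence identically zero on the connected set $U_{\mathcal{I}}$.

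\emph{Main obstacle.} The routine-looking part is genuinely the bookkeeping in Step two: one must be careful that $\mathcal{I}$ and $\mathcal{I}_x$ are generally \emph{distinct} non-commuting imaginary units, so the cancellation is not term-by-term but relies on regrouping via $\mathcal{I}_x(1\pm\mathcal{I}_x\mathcal{I}) = (1\mp\mathcal{I}\mathcal{I}_x)\mathcal{I}_x$ and then using the slice equation on the correct slice. I expect this algebraic verification — keeping left/right multiplication straight in the Clifford algebra $\mathbb{R}_n$ — to be the only real point requiring care; everything else follows from the definitions and the identity principle.
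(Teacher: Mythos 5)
The paper does not actually prove this theorem: it is quoted as a known result from \cite{CSS1, CSS3}, and your strategy is essentially the standard proof from those references --- define the candidate $\Phi$, check that it agrees with $f$ on $U_{\mathcal I}$ (hence on $U\cap\mathbb R$, which is a nonempty open subset of $\mathbb R$ contained in every slice because $U$ is an s-domain), show $\overline{\partial}_{\mathcal I_x}\Phi=0$ slice by slice, and finish with the identity principle (third item of Proposition \ref{properties}, via the Splitting Lemma). Steps one and three are fine.

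The problem is that the algebraic identity you yourself single out as the crux of Step two is false in both versions you state. Neither $\mathcal I_x(1\pm\mathcal I_x\mathcal I)=\pm(1\pm\mathcal I_x\mathcal I)\mathcal I_x$ nor $\mathcal I_x(1\pm\mathcal I_x\mathcal I)=(1\mp\mathcal I\,\mathcal I_x)\mathcal I_x$ holds for non-commuting units: for instance $\mathcal I_x(1+\mathcal I_x\mathcal I)=\mathcal I_x-\mathcal I$, whereas $(1-\mathcal I\,\mathcal I_x)\mathcal I_x=\mathcal I_x+\mathcal I$ and $(1+\mathcal I_x\mathcal I)\mathcal I_x=\mathcal I_x+\mathcal I_x\mathcal I\,\mathcal I_x$, which equals $\mathcal I_x-\mathcal I$ only when $\mathcal I$ and $\mathcal I_x$ commute. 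The cancellation actually rests on identities in which $\mathcal I$ is multiplied on the right as well. Writing $F(u,v)=f_{\mathcal I}(u+\mathcal Iv)$ and $G(u,v)=f_{\mathcal I}(u-\mathcal Iv)$, slice monogenicity gives $\partial_vF=\mathcal I\,\partial_uF$ and $\partial_vG=-\mathcal I\,\partial_uG$, and then
\begin{align*}
\mathcal I_x(1-\mathcal I_x\mathcal I)\mathcal I=(\mathcal I_x+\mathcal I)\mathcal I=-(1-\mathcal I_x\mathcal I),
\qquad
\mathcal I_x(1+\mathcal I_x\mathcal I)\mathcal I=(\mathcal I_x-\mathcal I)\mathcal I=1+\mathcal I_x\mathcal I,
\end{align*}
which yield $\mathcal I_x\partial_v\Phi=-\partial_u\Phi$, i.e.\ $\overline{\partial}_{\mathcal I_x}\Phi=0$. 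With this replacement your argument closes; as written, the pivotal computation would not go through.
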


\begin{lemma}\label{SplittingLemma} (Splitting Lemma). Let $U \subset \mathbb  R^{n+1}$  be an open set and $f\in \mathcal{SM}( U)$.   For every $\mathcal I = \mathcal I_1 \in \mathbb  S$, let $\mathcal I_2,\dots  ,\mathcal  I_n$  be
	a completion to a basis of $\mathbb R_n$ satisfying the defining relations $\mathcal I_r \mathcal I_s + \mathcal I_s \mathcal I_r = - 2\delta_{rs}$.
	Then there exist $2^{n-1}$ many holomorphic functions $F_A : U_{\mathcal I}  \to \mathbb C_{\mathcal I}$ such that
	\begin{align*}
		f_{\mathcal  I}(z)=\sum^{n-1}_{|A|=0}F_A(z) {\mathcal I}_A, \end{align*}
	where  ${\mathcal I}_A={\mathcal I}_{i_1}\cdots{\mathcal I}_{i_s}$ and $A=\{i_1,\dots , i_s\}$ is a subset of $\{2,\dots, n\}$, with $i_1<\dots< i_s$, and $I_{\emptyset}= 1$.
\end{lemma}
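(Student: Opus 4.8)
The plan is to reduce the statement to the classical Splitting Lemma for slice monogenic (or, after restriction, slice holomorphic-type) functions, which is a prototype result in the literature \cite{CSS1, CSS3}. First I would fix $\mathcal I = \mathcal I_1 \in \mathbb S$ and complete it to a basis $\{\mathcal I_1, \mathcal I_2, \dots, \mathcal I_n\}$ of the vector space of $1$-vectors, so that the products $\mathcal I_r \mathcal I_s + \mathcal I_s \mathcal I_r = -2\delta_{rs}$ hold; such a completion exists because any unit $1$-vector can be taken as the first element of an orthonormal basis of $\mathbb R^n$. Then I would observe that the $2^{n-1}$ ordered products $\mathcal I_A = \mathcal I_{i_1}\cdots \mathcal I_{i_s}$ with $A = \{i_1 < \dots < i_s\} \subseteq \{2,\dots,n\}$ form a basis, over $\mathbb C_{\mathcal I}$, of $\mathbb R_n$ viewed as a left (or right) $\mathbb C_{\mathcal I}$-module: indeed $\mathbb R_n$ has real dimension $2^n$, $\mathbb C_{\mathcal I}$ has real dimension $2$, and the $\mathcal I_A$ are $\mathbb C_{\mathcal I}$-linearly independent since together with $\mathcal I \mathcal I_A$ they span the $2^n$ basis elements $e_B$.

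Granting this module decomposition, every function $f_{\mathcal I}: U_{\mathcal I} \to \mathbb R_n$ can be written \emph{uniquely} as $f_{\mathcal I}(z) = \sum_{|A|=0}^{n-1} F_A(z)\, \mathcal I_A$ with $F_A : U_{\mathcal I} \to \mathbb C_{\mathcal I}$; the functions $F_A$ are obtained by composing $f_{\mathcal I}$ with the $\mathbb C_{\mathcal I}$-linear coordinate projections associated with the basis $\{\mathcal I_A\}$, so they inherit real differentiability from $f$. The next step is to feed this expansion into the slice monogenicity condition. Writing $z = u + \mathcal I v$ and applying $\overline{\partial}_{\mathcal I} = \tfrac12(\partial_u + \mathcal I \partial_v)$, and using that each $\mathcal I_A$ is a constant and that $\mathcal I$ commutes with every $\mathcal I_A$ up to the module structure — more precisely, that $\overline{\partial}_{\mathcal I}$ acts on the left while the $\mathcal I_A$ sit on the right — we get
\begin{align*}
0 = \overline{\partial}_{\mathcal I} f_{\mathcal I}(u+\mathcal I v) = \sum_{|A|=0}^{n-1} \left( \overline{\partial}_{\mathcal I} F_A \right)(u + \mathcal I v)\, \mathcal I_A .
\end{align*}
By the uniqueness of the module decomposition, each coefficient $\overline{\partial}_{\mathcal I} F_A$ must vanish on $U_{\mathcal I}$; since $F_A$ takes values in $\mathbb C_{\mathcal I} \cong \mathbb C$, the equation $\tfrac12(\partial_u + \mathcal I \partial_v) F_A = 0$ is exactly the Cauchy–Riemann system for a holomorphic function of the complex variable $z = u + \mathcal I v$. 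Hence each $F_A$ is holomorphic, which is the assertion.

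The main subtlety — the step I would be most careful about — is the interplay between the side on which the differential operator acts and the side on which the basis elements $\mathcal I_A$ sit. Because $f$ is a \emph{left} slice monogenic function, $\overline{\partial}_{\mathcal I}$ is applied from the left, whereas in the Splitting Lemma the $\mathcal I_A$ are placed on the right of the scalar-type coefficients $F_A$; one must check that $\partial_u$ and $\partial_v$ genuinely pass through the constant right-factors $\mathcal I_A$ and that the leftover factor $\mathcal I$ commutes appropriately, i.e. that $\mathcal I (F_A \mathcal I_A) = (\mathcal I F_A)\mathcal I_A = (F_A \mathcal I)\mathcal I_A$, which holds precisely because $F_A \in \mathbb C_{\mathcal I}$ and $\mathcal I$ is central in $\mathbb C_{\mathcal I}$. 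Once this bookkeeping is set up correctly, the linearity of $\overline{\partial}_{\mathcal I}$ and the uniqueness of the $\mathbb C_{\mathcal I}$-module expansion do the rest; no hard analysis is involved, only careful use of the Clifford algebra structure.
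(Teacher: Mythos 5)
Your proof is correct and is the standard argument for the Splitting Lemma; the paper itself gives no proof of this preliminary lemma, quoting it from the slice monogenic literature \cite{CSS1, CSS3}, and your left $\mathbb C_{\mathcal I}$-module decomposition of $\mathbb R_n$ with basis $\{\mathcal I_A\}$ followed by termwise application of $\overline{\partial}_{\mathcal I}$ is precisely the argument found there. The two points you flag — the $\mathbb C_{\mathcal I}$-linear independence of the $\mathcal I_A$ (obtained by pairing them with $\mathcal I\,\mathcal I_A$ to recover an $\mathbb R$-basis of $\mathbb R_n$) and the fact that $\mathcal I(F_A\mathcal I_A)=(\mathcal I F_A)\mathcal I_A$ with $\mathcal I F_A = F_A \mathcal I$ for $F_A\in\mathbb C_{\mathcal I}$ — are exactly the right things to check, and you handle them correctly.
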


\begin{remark}
	If $U \subset \mathbb R^{n+1}$, we will use the symbol $\textrm{Hol}(U_{\mathcal I})$ to denote the complex linear space of holomorphic functions from $U_{\mathcal I}$ to $\mathbb C_{\mathcal I}$.
\end{remark}
Let $x=x_0 + \underline{x} = u + \mathcal I_x v  \in \mathbb R^{n+1}$ be a nonzero paravector, where $u = x_0$, $v = |\underline{x}|$ and $\mathcal I_x = {v}^{-1}\underline{x}$, with $v \neq 0$ and for $v = 0$, see \eqref{unitvector}. Moreover, if
$\mathcal I_x = (\zeta_1,\dots,\zeta_n)$ then $x_k = v\zeta_k$  for $k = 1,\dots, n$, see \cite{CSS1, CSS3} for more details.

From now on, $U\subset \mathbb R^{n+1}$ denotes an axially symmetric s-domain.
\begin{proposition} \label{properties}  
	Suppose that $f\in \mathcal {SM}(U)$ and $\mathcal I \in \mathbb S$. If $\overline{\mathbb D(y,r)}\subset U$, then
	\begin{align*}    
		f(z) = \frac{1}{2\pi }\int_{\partial \mathbb D(y,r)_{\mathcal I} }  (w- z)^{-1} d_w\sigma(\mathcal I)   f (w), \quad \forall z\in \mathbb D(y,r)_{\mathcal I}, 
	\end{align*}
	where $d_w\sigma(\mathcal I) = - (d_w\sigma) \mathcal I $ and ${\partial \mathbb D(y,r)_{\mathcal I} }$ is the boundary of $\mathbb D(y,r)_{\mathcal I}$ in $U_{\mathcal I}$. 
	In addition, 
	\begin{align*} 
		\int_{\Gamma} d_w\sigma(\mathcal I)   f (w) =0, 
	\end{align*}
	for all $\mathcal I\in \mathbb S$ and for any closed, homotopic to a point and piecewise $C^1$ curve $\Gamma\subset U_{\mathcal I}$. 
	
	On the other hand, if the set  $Z_f \cap U_{\mathcal I} = \{z \in U_{\mathcal I} \ \mid \ f(z) = 0\}$ has an accumulation point in $U_{\mathcal I}$, then $f \equiv 0$  on $U$. 
	
	Finally, if $\ell\in C(U, \mathbb R_n)$ satisfies   
	\begin{align*} \int_{\Gamma} d_w\sigma(\mathcal I)  \ell (w) =0,
	\end{align*}
	for any closed, homotopic to a point and piecewise $C^1$ curve $\Gamma\subset U_{\mathcal I}$ and for all $\mathcal I \in \mathbb S$, then $\ell\in \mathcal{SM} (U)$.
\end{proposition}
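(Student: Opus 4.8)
The plan is to reduce every assertion, one slice at a time, to a classical theorem of one–variable complex function theory, the bridge being the Splitting Lemma. Fix $\mathcal I\in\mathbb S$ and complete it to $\mathcal I=\mathcal I_1,\mathcal I_2,\dots,\mathcal I_n$ as in Lemma \ref{SplittingLemma}. I will use two elementary algebraic facts repeatedly: every element of $\mathbb R_n$ is written uniquely as $\sum_A c_A\mathcal I_A$ with $c_A\in\mathbb C_{\mathcal I}$, and such a sum is zero if and only if each $c_A=0$; and, for $w\neq z$ in $\mathbb C_{\mathcal I}$, the kernel $(w-z)^{-1}$ together with the form $d_w\sigma(\mathcal I)=-(d_w\sigma)\mathcal I$ take values in the commutative field $\mathbb C_{\mathcal I}\cong\mathbb C$. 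Hence, after this identification and after pulling the constants $\mathcal I_A$ to the right of the integral sign, all the contour expressions in the statement turn into ordinary complex line integrals acting on $\mathbb C_{\mathcal I}$-valued integrands.

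First I would treat the Cauchy formula. Applying the Splitting Lemma to $f$, write $f_{\mathcal I}=\sum_A F_A\,\mathcal I_A$ with $F_A\in\textrm{Hol}(U_{\mathcal I})$; since $\overline{\mathbb D(y,r)}\subset U$, the disc $\mathbb D(y,r)_{\mathcal I}$ together with its boundary lies in $U_{\mathcal I}$, so the classical Cauchy integral formula applied to each $F_A$ gives
\[
\frac{1}{2\pi}\int_{\partial\mathbb D(y,r)_{\mathcal I}}(w-z)^{-1}\,d_w\sigma(\mathcal I)\,f(w)=\sum_A F_A(z)\,\mathcal I_A=f(z),\qquad z\in\mathbb D(y,r)_{\mathcal I}.
\]
The same splitting turns $\int_\Gamma d_w\sigma(\mathcal I)\,f(w)$ into $\sum_A\bigl(\int_\Gamma d_w\sigma(\mathcal I)\,F_A(w)\bigr)\mathcal I_A$, and each term vanishes by Cauchy's theorem, because $F_A$ is holomorphic and $\Gamma$ is homotopic to a point in $U_{\mathcal I}$.

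Next, for the identity principle: if $Z_f\cap U_{\mathcal I}$ has an accumulation point $p\in U_{\mathcal I}$ for some $\mathcal I$, then splitting $f_{\mathcal I}=\sum_A F_A\,\mathcal I_A$ and using uniqueness of the decomposition forces each $F_A$ to vanish on a set with accumulation point $p$; because $U$ is an s-domain, $U_{\mathcal I}$ is a domain of $\mathbb C_{\mathcal I}$, so the classical identity theorem yields $F_A\equiv0$, i.e. $f_{\mathcal I}\equiv0$ on $U_{\mathcal I}$. To propagate this from the single slice to all of $U$, fix $x=u+\mathcal I_x v\in U$; axial symmetry gives $u\pm\mathcal I v\in U\cap\mathbb C_{\mathcal I}=U_{\mathcal I}$, and Theorem \ref{RepresentationFormula} with this $\mathcal I$ gives $f(x)=\tfrac12(1-\mathcal I_x\mathcal I)f(u+\mathcal I v)+\tfrac12(1+\mathcal I_x\mathcal I)f(u-\mathcal I v)=0$, whence $f\equiv0$ on $U$. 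I expect this globalization step, the only place where the axial symmetry and s-domain hypotheses are genuinely used, to be the part that requires most thought; the purely slicewise reductions above are mechanical once the commutativity of $\mathbb C_{\mathcal I}$ has been recorded.

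Finally, for the Morera-type converse: fix $\mathcal I$ and write $\ell_{\mathcal I}=\sum_A L_A\,\mathcal I_A$ with $L_A\in C(U_{\mathcal I},\mathbb C_{\mathcal I})$, no holomorphy assumed. Splitting the hypothesis $\int_\Gamma d_w\sigma(\mathcal I)\,\ell(w)=0$ and using uniqueness of the decomposition gives $\int_\Gamma d_w\sigma(\mathcal I)\,L_A(w)=0$ for every $A$ and every piecewise $C^1$ loop $\Gamma\subset U_{\mathcal I}$ homotopic to a point; restricting to boundaries of triangles inside small discs of $U_{\mathcal I}$ is exactly the hypothesis of Morera's theorem, so each $L_A\in\textrm{Hol}(U_{\mathcal I})$ and hence $\overline{\partial}_{\mathcal I}\ell_{\mathcal I}=\sum_A(\overline{\partial}_{\mathcal I}L_A)\,\mathcal I_A=0$ on $U_{\mathcal I}$. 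Since $\mathcal I\in\mathbb S$ is arbitrary, $\ell\in\mathcal{SM}(U)$. The one remaining subtlety, which I would check carefully, is that $\ell$ also possesses the real differentiability on $U$ required by the definition of $\mathcal{SM}(U)$; this follows from the smoothness of holomorphic functions together with the local form of the splitting.
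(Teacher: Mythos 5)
Your proposal is correct and follows exactly the route the paper itself indicates: the paper's proof is a one-line reduction to the classical Cauchy formula, Cauchy theorem, identity principle and Morera's theorem on the slice $\mathbb C_{\mathcal I}$, combined with the Splitting Lemma and the Representation Formula, which is precisely the argument you carry out in detail. The only difference is that you make explicit the commutativity bookkeeping and the globalization step via Theorem \ref{RepresentationFormula}, which the paper leaves to the cited references.
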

\begin{proof}
	The previous facts are direct consequence of the Cauchy formula, Cauchy theorem, identity principle and Morera's theorem respectively in the complex plane $\mathbb C_{\mathcal I}$ using also Representation Formula and Splitting Lemma, see \cite{CKPS, CSS1, CSS2, CSS3, CS1, CS2}.
\end{proof}

It should be pointed out that previous formulas may be extended on all axially symmetric s-domain $U$ but we will not develop this point here.

The following definition is inspired in the work \cite{G}, which presents a generalization of the  quaternionic slice regular functions. 

\begin{definition}\label{GeneralizedSM}
	Suppose that $\lambda\in \mathbb R$. If $f\in C^1( U, \mathbb R_n)$ satisfies  
	\begin{align*}
		\overline{\partial}_{{\mathcal  I}}  f _{\mathcal I} (u + \mathcal I v) +\lambda f _{\mathcal I}  = 0 , \quad \textrm{on  } U_{\mathcal I}
	\end{align*}
	for every $\mathcal I \in \mathbb S$, then $f$ is called $\lambda$-slice monogenic functions defined on $U$ and we write $\mathcal {SM}_\lambda (U)$ for the set of all $\lambda$-slice monogenic functions on $U$.
\end{definition}
Clearly $\mathcal {SM}_0 (U) = \mathcal {SM} (U)$. In the remainder of this section we require $\lambda\in \mathbb R$. 

A direct calculation shows
\begin{proposition} 
	If $f\in C^1( U, \mathbb R_n)$, then 
	\begin{align*}
		\overline{\partial}_{{\mathcal  I}} \left[ 
		e^{u\lambda}    f _{\mathcal I} (u + \mathcal I v) \right] = e^{u\lambda}  \left[
		\overline{\partial}_{{\mathcal  I}}  f _{\mathcal I} (u + \mathcal I v) +\lambda f _{\mathcal I}  \right] , \quad \forall  u + \mathcal I v \in U_{\mathcal I},
	\end{align*}
	for all $\mathcal I \in \mathbb S$.
\end{proposition}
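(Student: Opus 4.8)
The statement is the Leibniz rule for the first-order operator $\overline{\partial}_{\mathcal{I}}$, and the plan is simply to make that computation explicit. I would fix $\mathcal{I}\in\mathbb{S}$ and write $f_{\mathcal{I}}(u+\mathcal{I}v)=\sum_A f_A(u,v)\,e_A$ in the fixed basis of $\mathbb{R}_n$; since $f\in C^1(U,\mathbb{R}_n)$, the partial derivatives $\partial_u$ and $\partial_v$ act componentwise, so the ordinary scalar product rule applies in each coordinate. The two features of the multiplier $e^{u\lambda}$ that drive the argument are: (i) it is real-valued (because $u,\lambda\in\mathbb{R}$), hence central in $\mathbb{R}_n$, so it commutes with $\mathcal{I}$ and can be moved to the left of any Clifford-valued expression; and (ii) it is independent of $v$, so $\partial_v e^{u\lambda}=0$ while $\partial_u e^{u\lambda}=\lambda e^{u\lambda}$.

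Next I would expand $\overline{\partial}_{\mathcal{I}}\bigl[e^{u\lambda} f_{\mathcal{I}}\bigr]=\tfrac12\bigl(\partial_u+\mathcal{I}\partial_v\bigr)\bigl[e^{u\lambda} f_{\mathcal{I}}\bigr]$ using the product rule. By (ii), the $v$-derivative yields only $e^{u\lambda}\,\partial_v f_{\mathcal{I}}$, and the $u$-derivative splits into the term $(\partial_u e^{u\lambda})f_{\mathcal{I}}=\lambda e^{u\lambda} f_{\mathcal{I}}$ coming from differentiating the exponential and the term $e^{u\lambda}\,\partial_u f_{\mathcal{I}}$. Using (i) to pull $e^{u\lambda}$ past $\mathcal{I}$ and to collect it on the left, the two terms containing $\partial_u f_{\mathcal{I}}$ and $\partial_v f_{\mathcal{I}}$ recombine into $e^{u\lambda}\,\overline{\partial}_{\mathcal{I}} f_{\mathcal{I}}$, while the exponential-derivative term supplies the summand $\lambda e^{u\lambda} f_{\mathcal{I}}$; factoring $e^{u\lambda}$ out on the left gives the right-hand side. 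Since $\mathcal{I}\in\mathbb{S}$ was arbitrary, the identity holds for every $\mathcal{I}$.

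There is no genuine obstacle here: the entire content is the centrality and $v$-independence of $e^{u\lambda}$, after which the claim is exactly the Leibniz rule. The one point meriting care is the bookkeeping when collecting terms, in particular keeping track of the normalising factor $\tfrac12$ in the definition of $\overline{\partial}_{\mathcal{I}}$ and its interaction with the exponent in $e^{u\lambda}$ so that the coefficient of $f_{\mathcal{I}}$ on the right-hand side comes out as stated.
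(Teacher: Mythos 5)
Your approach is exactly the paper's: the paper offers no argument beyond ``a direct calculation shows,'' and the direct Leibniz computation you describe --- using that $e^{u\lambda}$ is real-valued, hence central, and independent of $v$ --- is that calculation. So in method there is nothing to criticize.

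However, the one point you explicitly defer (``keeping track of the normalising factor $\tfrac12$ \ldots so that the coefficient of $f_{\mathcal I}$ comes out as stated'') is precisely where the computation does \emph{not} come out as stated. With the paper's Definition of the operator, $\overline{\partial}_{\mathcal I}=\tfrac12\bigl(\partial_u+\mathcal I\,\partial_v\bigr)$, your expansion gives
\begin{align*}
\overline{\partial}_{\mathcal I}\bigl[e^{u\lambda}f_{\mathcal I}\bigr]
=\tfrac12\Bigl(\lambda e^{u\lambda}f_{\mathcal I}+e^{u\lambda}\partial_u f_{\mathcal I}+\mathcal I\,e^{u\lambda}\partial_v f_{\mathcal I}\Bigr)
= e^{u\lambda}\Bigl[\overline{\partial}_{\mathcal I}f_{\mathcal I}+\tfrac{\lambda}{2}\,f_{\mathcal I}\Bigr],
\end{align*}
i.e.\ the coefficient of $f_{\mathcal I}$ is $\lambda/2$, not $\lambda$. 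To obtain the identity exactly as printed one would need the multiplier $e^{2\lambda u}$ (or an unnormalized operator $\partial_u+\mathcal I\,\partial_v$); note that the paper later pairs $\lambda$ with functions satisfying $y''+2\lambda y'=0$, i.e.\ exponentials $e^{-2\lambda u}$, which is consistent with the factor $2$ being the intended normalization. So your proof strategy is sound and matches the paper's, but the assertion that the bookkeeping closes ``as stated'' is the one step you did not actually carry out, and carrying it out reveals a factor-of-two discrepancy in the proposition itself that you should either flag or resolve by adjusting the exponent.
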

As a consequence one gets
\begin{corollary}\label{corSMLambda}
	Let $f\in C ^1( U,\mathbb R_n)$. The mapping  $u+\mathcal I v \mapsto  e^{u\lambda} f _{\mathcal I} (u + \mathcal I v) $, for all $u+\mathcal I v\in U$, where $u,v\in \mathbb R$ and $\mathcal I\in \mathbb S$, belongs to $\mathcal {SM}(U)$ if and only if $f\in \mathcal {SM}_\lambda (U)$.
\end{corollary}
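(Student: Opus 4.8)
The plan is to reduce the claim to the proposition immediately preceding it, so almost all the work is already done. Write $g_{\mathcal I}(u+\mathcal I v) := e^{u\lambda} f_{\mathcal I}(u+\mathcal I v)$ for each $\mathcal I \in \mathbb S$. First I would observe that $g \in C^1(U,\mathbb R_n)$ whenever $f$ is, since $u \mapsto e^{u\lambda}$ is smooth and real-valued (so it commutes with the Clifford-algebra-valued factor $f_{\mathcal I}$ and with $\mathcal I$), and that the map $u+\mathcal I v \mapsto e^{u\lambda} f_{\mathcal I}(u+\mathcal I v)$ is a genuine well-defined function on $U$: because $\lambda \in \mathbb R$ the value depends only on the point $x = u+\mathcal I v$, not on the chosen representation, exactly as for $f$ itself on the axially symmetric s-domain $U$.

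Next I would apply the preceding Proposition with this same $f$: for every $\mathcal I \in \mathbb S$ and every $u+\mathcal I v \in U_{\mathcal I}$,
\begin{align*}
\overline{\partial}_{\mathcal I}\, g_{\mathcal I}(u+\mathcal I v) = \overline{\partial}_{\mathcal I}\bigl[e^{u\lambda} f_{\mathcal I}(u+\mathcal I v)\bigr] = e^{u\lambda}\bigl[\overline{\partial}_{\mathcal I} f_{\mathcal I}(u+\mathcal I v) + \lambda f_{\mathcal I}(u+\mathcal I v)\bigr].
\end{align*}
Since $e^{u\lambda}$ is a nonzero real scalar for every $u \in \mathbb R$, the right-hand side vanishes on $U_{\mathcal I}$ if and only if $\overline{\partial}_{\mathcal I} f_{\mathcal I} + \lambda f_{\mathcal I} = 0$ on $U_{\mathcal I}$. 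Running this equivalence over all $\mathcal I \in \mathbb S$ gives: $g \in \mathcal{SM}(U)$, i.e. $\overline{\partial}_{\mathcal I} g_{\mathcal I} = 0$ on each $U_{\mathcal I}$, precisely when $\overline{\partial}_{\mathcal I} f_{\mathcal I} + \lambda f_{\mathcal I} = 0$ on each $U_{\mathcal I}$, which by Definition \ref{GeneralizedSM} is the statement $f \in \mathcal{SM}_\lambda(U)$. This proves both implications simultaneously.

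There is essentially no obstacle here; the only point that needs a word of care is the bookkeeping on the unit $\mathcal I$: the Proposition is stated "for all $\mathcal I \in \mathbb S$", and one must check the slice-by-slice equation holds for each fixed $\mathcal I$ separately before quantifying, but since $u \mapsto e^{u\lambda}$ does not depend on $\mathcal I$ at all, nothing mixes across slices and the argument is clean. One could also remark that the inverse map $u+\mathcal I v \mapsto e^{-u\lambda}\,h_{\mathcal I}(u+\mathcal I v)$ sends $\mathcal{SM}(U)$ back into $\mathcal{SM}_\lambda(U)$, so the correspondence $f \leftrightarrow e^{u\lambda}f$ is a bijection between $\mathcal{SM}_\lambda(U)$ and $\mathcal{SM}(U)$, which is the content of the corollary and explains why the $\lambda$-theory carries no genuinely new analytic information beyond the classical case $\lambda = 0$.
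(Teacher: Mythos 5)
Your proposal is correct and follows exactly the route the paper intends: the corollary is stated there as an immediate consequence of the preceding proposition, and your argument simply makes explicit that $e^{u\lambda}$ is a nonzero real scalar, so $\overline{\partial}_{\mathcal I}\bigl[e^{u\lambda} f_{\mathcal I}\bigr]=0$ on $U_{\mathcal I}$ if and only if $\overline{\partial}_{\mathcal I} f_{\mathcal I}+\lambda f_{\mathcal I}=0$ there, slice by slice. Nothing is missing.
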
 
From now on we make the assumption: $f \in  \mathcal {SM}_\lambda (U)$. 
\begin{proposition}\label{cor-Rep-Split-lambda}
	
	\noindent
	
	I. (Representation Theorem) Let $x = u + \mathcal I_x v \in  U$, where $u,v\in \mathbb R$ and $\mathcal I_x \in \mathbb S$, then  
	\begin{align}\label{represtheoremlambda}
		f(x) = \frac{1}{2} \left( 1 - \mathcal I_x \mathcal I \right) f(u + \mathcal Iv) + \frac{1}{2} \left(1 + \mathcal I_x \mathcal I \right) 	
		f(u- \mathcal  Iv), \quad \forall \mathcal I \in \mathbb S.
	\end{align}
	
	\noindent
	
	II. (Splitting Lemma) For every $\mathcal I = \mathcal I_1 \in \mathbb  S$, let $\mathcal I_2,\dots  ,\mathcal  I_n$  be
	a completion to a basis of $\mathbb R_n$ satisfying   $\mathcal I_r \mathcal I_s + \mathcal I_s \mathcal I_r = - 2\delta_{rs}$.
	Then there exist $2^{n-1}$ many holomorphic functions $F_A : U_{\mathcal I}  \to \mathbb C_{\mathcal I}$ such that
	\begin{align}\label{serieslambda}
		f_{\mathcal  I}(z)=\sum^{n-1}_{|A|=0}e^{-\lambda \Re z  }F_A(z) {\mathcal I}_A, 
	\end{align}
	where ${\mathcal I}_A={\mathcal I}_{i_1}\cdots{\mathcal I}_{i_s}$ and $A=\{i_1,\dots , i_s\}$ is a subset of $\{2,\dots, n\}$, with $i_1<\dots< i_s$, or $I_{\emptyset}= 1$.
\end{proposition}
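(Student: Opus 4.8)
The plan is to reduce both assertions to the already–established results for ordinary slice monogenic functions by means of the exponential substitution of Corollary~\ref{corSMLambda}. Starting from $f \in \mathcal{SM}_\lambda(U)$, I would introduce the auxiliary function $g$ on $U$ given by $g(u+\mathcal I v) := e^{u\lambda}\, f_{\mathcal I}(u+\mathcal I v)$ for $u+\mathcal I v \in U_{\mathcal I}$, $\mathcal I \in \mathbb S$. Since $u = \Re(u+\mathcal I v)\in\mathbb R$ and $\lambda\in\mathbb R$, the factor $e^{u\lambda}$ is a strictly positive real scalar; hence it is central in $\mathbb R_n$, and $g$ is a well-defined $\mathbb R_n$-valued $C^1$ function on $U$. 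By Corollary~\ref{corSMLambda} we get $g \in \mathcal{SM}(U)$, which is the footing needed to invoke Theorem~\ref{RepresentationFormula} and Lemma~\ref{SplittingLemma}.

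For part I, I would apply the Representation Formula (Theorem~\ref{RepresentationFormula}), legitimate because $U$ is an axially symmetric s-domain, to $g$ at $x = u+\mathcal I_x v$:
\[
g(x) = \frac{1}{2}\left(1 - \mathcal I_x \mathcal I\right) g(u+\mathcal I v) + \frac{1}{2}\left(1 + \mathcal I_x \mathcal I\right) g(u-\mathcal I v), \qquad \forall\, \mathcal I \in \mathbb S.
\]
The key observation is that the three paravectors $x$, $u+\mathcal I v$ and $u-\mathcal I v$ all share the same real part $u$, so that $g(x) = e^{u\lambda} f(x)$ and $g(u\pm\mathcal I v) = e^{u\lambda} f(u\pm\mathcal I v)$ carry the common nonzero scalar factor $e^{u\lambda}$. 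Dividing the displayed identity by $e^{u\lambda}$ removes it from all three terms and yields exactly \eqref{represtheoremlambda}.

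For part II, I would fix $\mathcal I = \mathcal I_1 \in \mathbb S$ and complete it to a basis $\mathcal I_1,\dots,\mathcal I_n$ of $\mathbb R_n$ with $\mathcal I_r\mathcal I_s + \mathcal I_s\mathcal I_r = -2\delta_{rs}$, as in the statement. Applying the Splitting Lemma (Lemma~\ref{SplittingLemma}) to $g \in \mathcal{SM}(U)$ produces $2^{n-1}$ holomorphic functions $F_A : U_{\mathcal I} \to \mathbb C_{\mathcal I}$ with $g_{\mathcal I}(z) = \sum_{|A|=0}^{n-1} F_A(z)\,\mathcal I_A$, the sum running over subsets $A=\{i_1,\dots,i_s\}$ of $\{2,\dots,n\}$ with $i_1<\dots<i_s$. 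Since $g_{\mathcal I}(z) = e^{\lambda \Re z}\, f_{\mathcal I}(z)$ and $e^{\lambda \Re z}$ is a nonvanishing real scalar, multiplying through by $e^{-\lambda \Re z}$ gives $f_{\mathcal I}(z) = \sum_{|A|=0}^{n-1} e^{-\lambda \Re z}\, F_A(z)\,\mathcal I_A$, which is precisely \eqref{serieslambda}.

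I do not expect a substantial obstacle in this argument; the only points that require care are bookkeeping ones, namely recording explicitly that $e^{u\lambda}$ is a central positive scalar (so it may be pulled through Clifford-valued expressions and cancelled), and noting that $\Re z = u$ is constant along the horizontal slices $\{u\pm\mathcal I v : \mathcal I\in\mathbb S\}$ entering the Representation Formula — this constancy is exactly what makes the cancellation in part I go through uniformly in $\mathcal I$.
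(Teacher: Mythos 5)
Your proof is correct and takes exactly the route the paper intends: its proof of this proposition is the one-line citation of Corollary~\ref{corSMLambda}, Theorem~\ref{RepresentationFormula} and Lemma~\ref{SplittingLemma}, which is precisely the exponential-substitution argument you carry out in detail. Your added care about $e^{u\lambda}$ being a central nonzero real scalar and about $\Re$ being constant across the points $x$, $u+\mathcal I v$, $u-\mathcal I v$ is a useful explicit justification of what the paper leaves implicit.
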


\begin{proof} It follows from Corollary \ref{corSMLambda}, Theorem \ref{RepresentationFormula} and Lemma \ref{SplittingLemma}.
\end{proof}

\begin{proposition} \label{propertieslambda}  Given $\mathcal I \in \mathbb S$. If  $\overline{\mathbb D(y,r)}\subset U$, then
	\begin{align*}    
		f(z) = \frac{1}{2\pi }\int_{\partial \mathbb D(y,r)_{\mathcal I} }e^{\lambda (\Re w-\Re z)}(w- z)^{-1} d_w\sigma(\mathcal I) f (w), \quad \forall z\in \mathbb D(y,r)_{\mathcal I}. 
	\end{align*}
	In addition, 
	\begin{align*}    
		\int_{\Gamma} & e^{\lambda \Re w}d_w\sigma(\mathcal I)f(w) =0, 
	\end{align*}
	for all $\mathcal I\in \mathbb S$ and for any closed, homotopic to a point and  piecewise $C^1$ curve $\Gamma\subset U_{\mathcal I}$. 
	
	If $\{z \in U_{\mathcal I} \ \mid \ e^{\lambda\Re z} f (z) = 0\}$ has an accumulation point  in $U_{\mathcal I}$  then $f \equiv 0$  on $U$. On the other hand, if $\ell\in C(U, \mathbb R_n)$ satisfies   
	\begin{align*} 
		\int_{\Gamma} e^{\lambda \Re w} d_w\sigma(\mathcal I)  \ell (w) =0,
	\end{align*}
	for any closed, homotopic to a point and  piecewise $C^1$ curve $\Gamma\subset U_{\mathcal I}$ and for all $\mathcal I \in \mathbb S$, then $\ell\in \mathcal{SM}_{\lambda} (U)$.
\end{proposition}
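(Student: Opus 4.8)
The plan is to reduce each of the four assertions to its counterpart in Proposition \ref{properties} by means of the substitution $g := e^{\lambda \Re(\cdot)}\, f$, which by Corollary \ref{corSMLambda} converts an element of $\mathcal{SM}_\lambda(U)$ into an element of $\mathcal{SM}(U)$ and conversely. The single fact exploited throughout is that $e^{\lambda \Re w}$ is a real scalar: it commutes with the Clifford-valued form $d_w\sigma(\mathcal I)$ and with $(w-z)^{-1}$, and the constant $e^{-\lambda\Re z}$ may be pulled inside the relevant integrals.

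First, for the Cauchy representation I would fix $\mathcal I \in \mathbb S$ with $\overline{\mathbb D(y,r)}\subset U$. Since $g \in \mathcal{SM}(U)$, Proposition \ref{properties} gives $g(z) = \frac{1}{2\pi}\int_{\partial \mathbb D(y,r)_{\mathcal I}} (w-z)^{-1} d_w\sigma(\mathcal I)\, g(w)$ for all $z \in \mathbb D(y,r)_{\mathcal I}$. Substituting $g(w) = e^{\lambda\Re w} f(w)$ and $g(z) = e^{\lambda\Re z} f(z)$, then multiplying through by the scalar $e^{-\lambda\Re z}$ and absorbing it under the integral sign, yields exactly the stated formula with kernel factor $e^{\lambda(\Re w - \Re z)}$. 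For the Cauchy theorem, applying the vanishing statement of Proposition \ref{properties} to $g$ gives $\int_{\Gamma} d_w\sigma(\mathcal I)\, g(w) = 0$ for every closed, null-homotopic, piecewise $C^1$ curve $\Gamma \subset U_{\mathcal I}$; since $d_w\sigma(\mathcal I)\, g(w) = e^{\lambda\Re w} d_w\sigma(\mathcal I)\, f(w)$, this is the asserted identity.

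For the identity principle, I would observe that $\{z \in U_{\mathcal I} \mid e^{\lambda\Re z} f(z) = 0\}$ coincides with the zero set of $g$ in $U_{\mathcal I}$; if it has an accumulation point in $U_{\mathcal I}$ then $g \equiv 0$ on $U$ by Proposition \ref{properties}, and since $e^{\lambda\Re z}$ never vanishes, $f \equiv 0$ on $U$. Finally, for the Morera-type converse, given $\ell \in C(U,\mathbb R_n)$ with $\int_{\Gamma} e^{\lambda\Re w} d_w\sigma(\mathcal I)\, \ell(w) = 0$ for all such $\Gamma$ and all $\mathcal I$, I would set $m := e^{\lambda\Re(\cdot)}\, \ell \in C(U,\mathbb R_n)$; then $\int_{\Gamma} d_w\sigma(\mathcal I)\, m(w) = 0$, so Proposition \ref{properties} gives $m \in \mathcal{SM}(U)$, whence $\ell = e^{-\lambda\Re(\cdot)}\, m \in \mathcal{SM}_\lambda(U)$ by Corollary \ref{corSMLambda}.

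I do not expect a genuine obstacle here: the argument is a routine transfer through the exponential weight. The only point requiring attention is the bookkeeping of where the scalar factor $e^{\lambda\Re w}$ sits relative to the Clifford-valued differential form and the Cauchy kernel, which is harmless precisely because that factor is real-valued and hence central.
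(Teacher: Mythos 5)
Your proposal is correct and follows essentially the same route as the paper: both reduce the four assertions to Proposition \ref{properties} via the weight $e^{\lambda\Re(\cdot)}$ and Corollary \ref{corSMLambda}. Your write-up merely makes explicit the bookkeeping (centrality of the real scalar factor, nonvanishing of the exponential) that the paper leaves implicit.
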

\begin{proof}
	Corollary \ref{corSMLambda} shows that the mapping $u+\mathcal I v \mapsto  e^{u\lambda} f_{\mathcal I} (u + \mathcal I v)$, for all $u+\mathcal I v\in U$, where $u,v\in \mathbb R$ and $\mathcal I\in \mathbb S$, satisfies the first three facts of Proposition \ref{properties}. Finally the last property  follows from  Corollary \ref{corSMLambda} and the last fact of Proposition \ref{properties}. 
\end{proof}

On account of the Representation Theorem given in Proposition \ref{cor-Rep-Split-lambda} we can extend some formulas of Proposition \ref{propertieslambda} on all axially symmetric s-domain $U$. 

\section{Fractional slice monogenic functions with respect to a pair of real valued functions}\label{2}
\subsection{Riemann-Liouville fractional slice monogenic functions with respect to a pair of real valued functions}
Let  $a, b, c\in\mathbb R$ with $a<b$, $c>0$ and let us consider the special class of axially symmetric s-domains in $\mathbb R^{n+1}$ defined by
$$\mathcal S_{a , b,c }=\{    u+ {\mathcal  I} v  \in \mathbb R^{n+1}  \  \mid  \ u \in [ a, b] ,  \ v \in [0,c], \  {\mathcal  I} \in \mathbb S \}.$$
Given ${\mathcal  I}\in \mathbb S$, denote $\mathcal S_{a , b,c,{\mathcal  I} } := \mathcal  S_{a , b,c }\cap \mathbb C_{\mathcal  I} 
\subset \{u+{\mathcal  I}v; \ |\ v \geq 0\}$. Consider $r\in [a,b]$, $s\in [0,c]$ and $\alpha, \beta \in (0,1)$.

Let us denote by $AC^1(\mathcal S_{a , b,c }, \mathbb R_n)$ the class of $\mathbb R_n$-valued continuously functions $f$  on $\mathcal S_{a , b,c }$ such that  the mapings  $u\mapsto  f\mid_{\mathcal S_{a , b, c, {\mathcal I} }}(u+{\mathcal I}s)$ and $v \mapsto  f\mid_{\mathcal S _{a , b, c, {\mathcal I} }}(r+{\mathcal I}v)$ are absolutely continuous for all $u\in[a,b]$, $v\in [0,c]$. Given $g \in C^1([a,b], \mathbb R)$ and $h \in C^1([0,c], \mathbb R)$ such that $g'(t) >0$ for all  $t\in [a,b]$ and  $ h'(t)>0$ for all  $t\in [0,c]$.   

Write  
{\footnotesize
	\begin{align}\label{ecua1}
		[{\bf I}_{a^+}^{1-\alpha , g} f\mid_{\mathcal S_{a , b,c,{\mathcal  I} }}](u+{\mathcal I}s) := & \frac{1}{\Gamma(1-\alpha )}
		\int_a^u \frac{f\mid_{\mathcal S_{a , b,c,{\mathcal I} }}(\tau + {\mathcal I} s)}{(g(u)-g(\tau))^{\alpha } } g'(\tau) d\tau, \ \textrm{with} \ u > a, \nonumber \\
		[{\bf I}_{0^+}^{1-\beta , h} f\mid_{\mathcal S_{a , b, c, {\mathcal I} }}](r+{\mathcal I}v) :=& \frac{1}{\Gamma(1-\beta ) } \int_{0}^v 
		\frac{f\mid_{\mathcal S_{a , b,c, {\mathcal I} }}(r + {\mathcal I}\tau )}{(h(v)-h(\tau))^{ \beta }}  h'(\tau) d\tau, \ \textrm{with} \ v > 0 ,\nonumber \\
		[{\bf I}_{b^- }^{1-\alpha, g } f\mid_{\mathcal S_{a , b,c,{\mathcal I} }}](u+{\mathcal I}s) := & \frac{1}{\Gamma(1-\alpha )} \int_u^b 
		\frac{f\mid_{\mathcal S_{a , b,c,{\mathcal I} }}(\tau + {\mathcal I} s)}{( g(\tau) -g( u) )^{ \alpha}} g'(\tau) d\tau, \ \textrm{with} \ u < b, \nonumber  \\
		[{\bf I}_{c^-}^{1-\beta, h} f\mid_{\mathcal S_{a , b, c, {\mathcal I} }}](r+\mathcal I v) :=& \frac{1}{\Gamma(1-\beta )} \int_{v}^c \frac{f\mid_{
				\mathcal S_{a , b,c, {\mathcal I} }}(r + {\mathcal I}\tau )}{( h(\tau) - h( v))^{ \beta}}  h'(\tau) d\tau, \ \textrm{with} \ v <  c .
	\end{align}
}

Let us introduce  natural extensions of the real fractal derivatives \eqref{FracDer} and \eqref{FracDer1} preserving their structure.
\begin{definition}\label{FDQ} 
	The fractional derivatives on the left and on the right of $f \in AC^1(\mathcal S_{a , b,c }, \mathbb R_n)$, in the Riemann-Liouville sense,  associated with the slice $\mathbb C_{\mathcal  I}$ of order  $(\alpha,\beta)$ and with respect to $(g,h)$ are defined by 
	\begin{align*}
		& ({_{RL}}D _{a^+0^+,{\mathcal  I} }^{(\alpha,\beta),(g,h)} f\mid_{{ \mathcal  S_{a , b, c, {\mathcal  I} }}})(u+{\mathcal  I}v , r,s) := ({_{RL}}D _{a^+  }^{ \alpha,g }   f\mid_{\mathcal  S_{a , b,c,{\mathcal  I} }}) (u+{\mathcal  I}s) + {\mathcal  I} ({_{RL}}D _{0^+  }^{ \beta, h }f\mid_{\mathcal  S_{a , b,c,{\mathcal  I} }})(r+{\mathcal I}v)  \\
		& = \frac{1}{g'(u)} \frac{\partial }{\partial u}[{\bf I}_{a^+}^{1-\alpha , g} f\mid_{\mathcal S_{a , b,c,{\mathcal  I} }}](u+{\mathcal I}s)
		+\mathcal I
		\frac{1}{h'(v)} \frac{\partial }{\partial v} [{\bf I}_{0^+}^{1-\beta , h} f\mid_{\mathcal S_{a , b, c, {\mathcal I} }}](r+{\mathcal I}v),
		\\
		&({_{RL}}D _{b^-c^-, {\mathcal  I}}^{(\alpha,\beta),(g,h)} f\mid_{{\mathcal  S_{a , b, c, {\mathcal  I } }}})(u+\mathcal  I v, r,s):=  ({_{RL}}D _{b^-  }^{\alpha, g }   f\mid_{\mathcal   S_{a , b,c,{\mathcal  I} }}) (u+{\mathcal  I}s) +\mathcal  I  ({_{RL}}D 
		_{c^-  }^{\beta, h}   f\mid_{\mathcal    S_{a , b,c,\mathcal  I  }})(r+{\mathcal  I} v)\\
		& = \frac{1}{g'(u)} \frac{\partial }{\partial u}[{\bf I}_{b^-}^{1-\alpha , g} f\mid_{\mathcal S_{a , b,c,{\mathcal  I} }}](u+{\mathcal I}s)
		+\mathcal I
		\frac{1}{h'(v)} \frac{\partial }{\partial v} [{\bf I}_{c^-}^{1-\beta , h} f\mid_{\mathcal S_{a , b, c, {\mathcal I} }}](r+{\mathcal I}v),
	\end{align*}
	respectively,  where $u$ and $v$ are the variables of partial fractional derivation. Similarly,  reader has no difficulty in assigning a meaning to ${_{RL}}D _{a^+c^-,{\mathcal  I} }^{(\alpha,\beta),(g,h)}$ and ${_{RL}}D _{b^-0^+, {\mathcal  I}}^{(\alpha,\beta),(g,h)}$:
	\begin{align*}
		& ({_{RL}}D _{a^+c^-,{\mathcal  I}}^{(\alpha,\beta),(g,h)} f\mid_{{ \mathcal  S_{a , b, c, {\mathcal  I} }}})(u+{\mathcal  I}v , r,s) := ({_{RL}}D _{a^+  }^{ \alpha,g } f\mid_{\mathcal  S_{a , b,c,{\mathcal  I} }}) (u+{\mathcal  I}s) + {\mathcal  I} ({_{RL}}D _{c^-  }^{ \beta, h }f\mid_{\mathcal  S_{a , b,c,{\mathcal  I} }})(r+{\mathcal I}v) ,\\
		&({_{RL}}D _{b^- 0^+, {\mathcal  I}}^{(\alpha,\beta),(g,h)} f\mid_{{\mathcal  S_{a , b, c, {\mathcal  I } }}})(u+\mathcal  I v, r,s):=  ({_{RL}}D _{b^-  }^{\alpha, g }   f\mid_{\mathcal   S_{a , b,c,{\mathcal  I} }}) (u+{\mathcal  I}s) +\mathcal  I  ({_{RL}}D 
		_{0^+  }^{\beta, h}   f\mid_{\mathcal    S_{a , b,c,\mathcal  I  }})(r+{\mathcal  I} v),
	\end{align*}
	In particular, if $(u,v) =(r,s)$, then 
	\begin{align*}
		& ({_{RL}}D _{a^+0^+,{\mathcal  I} }^{(\alpha,\beta),(g,h)} f\mid_{{ \mathcal  S_{a , b, c, {\mathcal  I} }}})(r+{\mathcal  I}s , r,s)  = ({_{RL}}D _{a^+  }^{ \alpha,g }   f\mid_{\mathcal  S_{a , b,c,{\mathcal  I} }}) (r+{\mathcal  I}s) + {\mathcal  I} ({_{RL}}D _{0^+  }^{ \beta ,h}f\mid_{\mathcal  S_{a , b,c,{\mathcal  I} }})(r+{\mathcal I}s) ,\\
		&=:  ({_{RL}}D _{a^+0^+,{\mathcal  I} }^{(\alpha,\beta),(g,h)} f\mid_{{ \mathcal  S_{a , b, c, {\mathcal  I} }}})(r+{\mathcal  I}s ) \end{align*}
	and similar notation is followed for other operators. 
\end{definition}
In general, there exist two options: 

\noindent

a) If $\alpha,\beta\in\mathbb C$ with  $0< \Re \alpha, \Re \beta <1$ then our fractal derivatives give us functions with values in the complex Clifford algebra  $\mathbb R_n(\mathbb C) $ which consist of  $\sum_{A} e_Ax_A$, where   $x_A\in\mathbb  C$.  

\noindent

b) If $\alpha=(\alpha_1,\alpha_2)$ and  $\beta=(\beta_1, \beta_2)$ then use the mapping $\alpha=(\alpha_1,\alpha_2)  \mapsto \alpha_1 + \mathcal I\alpha_2  \in \mathbb C_{\mathcal I}$, $\beta=(\beta_1,\beta_2) \mapsto \beta_1 + \mathcal I\beta_2  \in \mathbb C_{\mathcal I}$ for each  $\mathcal I\in \mathbb S$ with  $0< \alpha_1, \beta_1 <1$.  

For abbreviation, we will restrict ourselves to $\alpha,\beta\in(0,1)$.  

These slice  monogenic  fractional  operators  are  $\mathbb R_n$ right-linear operators. In addition, the right versions of the previous operators are given by
\begin{align*} 
	& ({_{RL}}D _{a^+0^+,{\mathcal I},r }^{(\alpha,\beta),(g,h) } f\mid_{{\mathcal S_{a , b, c, {\mathcal I} }}})(u+{\mathcal I}v , r,s) :=
	({_{RL}}D _{a^+  }^{\alpha ,g}   f\mid_{\mathcal  S_{a , b,c,{\mathcal I} }}) (u+{\mathcal I}s) +  ({_{RL}}D _{0^+  }^{\beta,h }
	f\mid_{\mathcal S_{a , b,c,{\mathcal I} }})(r+{\mathcal I}v) {\mathcal I} ,\\
	&({_{RL}}D _{b^-c^-, {\mathcal I},r}^{(\alpha, \beta),(g,h)} f\mid_{{\mathcal S_{a , b, c, {\mathcal I} }}})(u+{\mathcal I}v, r,s):=
	({_{RL}}D _{b^-  }^{\alpha,g}   f\mid_{\mathcal S_{a , b,c,{\mathcal I} }}) (u+{\mathcal I}s) +  ({_{RL}}D _{c^-  }^{\beta,h}
	f\mid_{\mathcal S_{a , b,c,{\mathcal I} }})(r+{\mathcal I}v) {\mathcal I},
\end{align*}
where $u$ and $v$ are the variables of partial derivation  and    $ {_{RL}}D _{a^+c^-,{\mathcal  I} ,r}^{(\alpha,\beta),(g,h)}$ and ${_{RL}}D _{b^-0^+, {\mathcal  I},r }^{(\alpha,\beta),(g,h)}$ can be defined similarly.

Let $\lambda\in \mathbb R$ and let $\delta_1, \delta_2\in C^2(\mathbb R, \mathbb R)$ be two solutions of the following differential equation: 
$$y''+2\lambda y' =0.$$
So from now consider $g \in C^2([a,b], \mathbb R)$ and $h \in C^2([0,c], \mathbb R)$ given by $g(u)= \delta_1(u) $ and $g'(u) >0$ for all $u\in [a,b]$. Similarly,  $h(v)=\delta_2(v)$ and $h'(v)>0$ for all $v\in [0,c]$.

\begin{proposition}\label{FracProp1}
	Let $f\in AC^1(\mathcal S_{a , b,c }, \mathbb R_n)$. Then
	{\tiny
		\begin{align*}
			& 2\lambda  \left[  \frac{1}{ g'(u)  }   [{\bf I}_{a^+}^{1-\alpha ,g} f\mid_{\mathcal  S_{a , b,c,\mathcal  I }}](u+\mathcal  I s)  + 
			\mathcal I  \frac{1}{ h'(v)  }     [{\bf I}_{0^+}^{1-\beta, h} f\mid_{\mathcal  S_{a , b, c, \mathcal  I }}](r+\mathcal  I v) \right]  
			+ ({_{RL}}D _{a^+0^+,{\mathcal  I} }^{(\alpha,\beta),(g,h)} f\mid_{{\mathcal  S_{a , b, c, \mathcal  I }}})(u+\mathcal  I v , r,s)\\
			=&
			2\overline{\partial}_{{\mathcal  I}}   \left\{ \frac{1}{g'(u)}  [{\bf I}_{a^+}^{1-\alpha ,g} f\mid_{\mathcal  S_{a , b,c,\mathcal  I }}](u+\mathcal  I s) +
			\frac{1}{h'(v)} [{\bf I}_{0^+}^{1-\beta, h} f\mid_{\mathcal  S_{a , b, c, \mathcal  I }}](r+\mathcal  I v)\right\},\\
			&
			2\lambda  \left[  \frac{1}{ g'(u)  }   [{\bf I}_{b^-}^{1-\alpha ,g} f\mid_{\mathcal  S_{a , b,c,\mathcal  I }}](u+\mathcal  I s)  + 
			\mathcal I  \frac{1}{ h'(v)  }     [{\bf I}_{c^-}^{1-\beta, h} f\mid_{\mathcal  S_{a , b, c, \mathcal  I }}](r+\mathcal  I v) \right] +
			({_{RL}}D _{b^-c^-, {\mathcal  I}}^{(\alpha,\beta),(g,h)} f\mid_{{\mathcal  S_{a , b, c, \mathcal  I }}})(u+\mathcal  I v, r,s)\\
			=&
			- 2 \overline{\partial}_{{\mathcal  I }}  \left\{  \frac{1}{g'(u)}  [{\bf I}_{b^-}^{1-\alpha, g} f\mid_{\mathcal  S_{a , b,c,\mathcal  I  }}](u+\mathcal  I  s)   +
			\frac{1}{h'(v)}  [{\bf I}_{c^-}^{1-\beta,h} f\mid_{\mathcal  S_{a , b, c, \mathcal  I }}](s+\mathcal  I v)
			\right\}, \\
			& 2\lambda  \left[  \frac{1}{ g'(u)  }   [{\bf I}_{a^+}^{1-\alpha ,g} f\mid_{\mathcal  S_{a , b,c,\mathcal  I }}](u+\mathcal  I s)  + 
			\mathcal I  \frac{1}{ h'(v)  }     [{\bf I}_{c^-}^{1-\beta, h} f\mid_{\mathcal  S_{a , b, c, \mathcal  I }}](r+\mathcal  I v) \right]
			+ ({_{RL}}D _{a^+c^-,{\mathcal  I} }^{(\alpha,\beta),(g,h)} f\mid_{{\mathcal  S_{a , b, c, \mathcal  I }}})(u+\mathcal  I v , r,s)\\
			=&
			2\overline{\partial}_{{\mathcal  I}}   \left\{  \frac{1}{g'(u)}   [{\bf I}_{a^+}^{1-\alpha ,g} f\mid_{\mathcal  S_{a , b,c,\mathcal  I }}](u+\mathcal  I s) +
			\frac{1}{h'(v)}  [{\bf I}_{c^-}^{1-\beta, h} f\mid_{\mathcal  S_{a , b, c, \mathcal  I }}](r+\mathcal  I v)\right\},\\
			& 2\lambda  \left[  \frac{1}{ g'(u)  }   [{\bf I}_{b^-}^{1-\alpha ,g} f\mid_{\mathcal  S_{a , b,c,\mathcal  I }}](u+\mathcal  I s)  + 
			\mathcal I  \frac{1}{ h'(v)  }     [{\bf I}_{0^+}^{1-\beta, h} f\mid_{\mathcal  S_{a , b, c, \mathcal  I }}](r+\mathcal  I v) \right]+
			({_{RL}}D _{b^-0^+, {\mathcal  I}}^{(\alpha,\beta),(g,h)} f\mid_{{\mathcal  S_{a , b, c, \mathcal  I }}})(u+\mathcal  I v, r,s)\\
			=&
			- 2 \overline{\partial}_{{\mathcal  I }}  \left\{ \frac{1}{g'(u)}  [{\bf I}_{b^-}^{1-\alpha, g} f\mid_{\mathcal  S_{a , b,c,\mathcal  I  }}](u+\mathcal  I  s)   +
			\frac{1}{h'(v)}  [{\bf I}_{0^+}^{1-\beta,h} f\mid_{\mathcal  S_{a , b, c, \mathcal  I }}](s+\mathcal  I v)
			\right\},
		\end{align*}
	}
	for all $u+\mathcal I v \in \mathbb R^{n+1}$, where $u$ and $v$ are the variables of partial derivation of $\overline{\partial}_{{\mathcal I}} $.
\end{proposition}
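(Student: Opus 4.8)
The plan is to unwind each of the four identities into a single application of the product/quotient rule for the ordinary partial derivatives, combined with the one structural hypothesis actually in play, namely that $g$ and $h$ solve $y'' + 2\lambda y' = 0$; nothing about fractional integration will be needed beyond the defining formulas \eqref{FracDer}--\eqref{FracDer1}. I will carry out the first identity in detail and then indicate how the other three come out by the same manipulation.

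Fix $\mathcal I \in \mathbb S$ and write $\Phi(u) := [{\bf I}_{a^+}^{1-\alpha,g} f\mid_{\mathcal S_{a,b,c,\mathcal I}}](u+\mathcal I s)$ and $\Psi(v) := [{\bf I}_{0^+}^{1-\beta,h} f\mid_{\mathcal S_{a,b,c,\mathcal I}}](r+\mathcal I v)$. Since $f \in AC^1(\mathcal S_{a,b,c},\mathbb R_n)$, the partial restrictions $u \mapsto f(u+\mathcal I s)$ and $v \mapsto f(r+\mathcal I v)$ are $AC^1$ on $[a,b]$ and on $[0,c]$, so (working componentwise in $\mathbb R_n$, which is legitimate because every scalar appearing in \eqref{ecua1} is real) \eqref{FracDer} yields $\tfrac{\partial}{\partial u}\Phi(u) = g'(u)\,({}_{RL}D_{a^+}^{\alpha,g} f\mid_{\mathcal S_{a,b,c,\mathcal I}})(u+\mathcal I s)$ and $\tfrac{\partial}{\partial v}\Psi(v) = h'(v)\,({}_{RL}D_{0^+}^{\beta,h} f\mid_{\mathcal S_{a,b,c,\mathcal I}})(r+\mathcal I v)$.

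The heart of the argument is the right-hand side. Expanding $\overline{\partial}_{\mathcal I} = \tfrac12\bigl(\tfrac{\partial}{\partial u} + \mathcal I\tfrac{\partial}{\partial v}\bigr)$ and using that $\Phi/g'$ depends only on $u$ and $\Psi/h'$ only on $v$, the right-hand side equals $\tfrac{\partial}{\partial u}\!\bigl(\Phi/g'\bigr) + \mathcal I\,\tfrac{\partial}{\partial v}\!\bigl(\Psi/h'\bigr)$. The quotient rule gives $\tfrac{\partial}{\partial u}(\Phi/g') = \Phi'/g' - \Phi\, g''/(g')^2$, and $g'' + 2\lambda g' = 0$ turns $-g''/(g')^2$ into $2\lambda/g'$, so this is $\tfrac{1}{g'}(\Phi' + 2\lambda\Phi)$; likewise $\tfrac{\partial}{\partial v}(\Psi/h') = \tfrac{1}{h'}(\Psi' + 2\lambda\Psi)$ from $h'' + 2\lambda h' = 0$. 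Substituting the values of $\Phi', \Psi'$ from the previous step and splitting off the $2\lambda$-part, the right-hand side becomes $2\lambda\bigl[\Phi/g' + \mathcal I\,\Psi/h'\bigr] + ({}_{RL}D_{a^+}^{\alpha,g}f)(u+\mathcal I s) + \mathcal I\,({}_{RL}D_{0^+}^{\beta,h}f)(r+\mathcal I v)$, which is exactly the left-hand side by Definition \ref{FDQ}.

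The remaining three identities are proved by repeating this two-step computation verbatim, replacing in the relevant slots the left-endpoint objects ${\bf I}_{a^+}^{1-\alpha,g}, {\bf I}_{0^+}^{1-\beta,h}, {}_{RL}D_{a^+}^{\alpha,g}, {}_{RL}D_{0^+}^{\beta,h}$ by their right-endpoint counterparts ${\bf I}_{b^-}^{1-\alpha,g}, {\bf I}_{c^-}^{1-\beta,h}, {}_{RL}D_{b^-}^{\alpha,g}, {}_{RL}D_{c^-}^{\beta,h}$ and \eqref{FracDer} by \eqref{FracDer1}, which is what dictates the sign in front of $\overline{\partial}_{\mathcal I}$ in the two ``$b^-$'' cases. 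I foresee no genuine obstacle: the statement is, by the very choice of $g$ and $h$, an algebraic identity in which the $g''/(g')^2$ and $h''/(h')^2$ terms thrown off by the quotient rule precisely reconstitute the $2\lambda$ prefactor. The only points that deserve a careful sentence are the reductions ``$\tfrac{1}{g'}\tfrac{\partial}{\partial u}[{\bf I}_{a^+}^{1-\alpha,g}f] = {}_{RL}D_{a^+}^{\alpha,g}f$'' and their analogues — which are exactly \eqref{FracDer}--\eqref{FracDer1} once one checks that the partial restrictions of $f$ are $AC^1$ — and the free commutation of $\mathcal I$ and of the real factors $2\lambda, g'(u), h'(v)$ past the Clifford-valued fractional integrals, so that the whole computation reduces to the scalar one componentwise.
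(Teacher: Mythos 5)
Your proof is correct and follows essentially the same route as the paper's: expand $2\overline{\partial}_{\mathcal I}$ of the weighted sum of fractional integrals via the product/quotient rule, use $g''+2\lambda g'=0$ and $h''+2\lambda h'=0$ to convert the derivative of $1/g'$ (resp.\ $1/h'$) into $2\lambda/g'$ (resp.\ $2\lambda/h'$), and identify the remaining terms with the slice fractional derivative of Definition \ref{FDQ} through \eqref{FracDer}--\eqref{FracDer1}, treating the other three identities as verbatim repetitions. You in fact handle the sign in $\frac{\partial}{\partial u}\left(\frac{1}{g'}\right)=-\frac{g''}{(g')^2}=\frac{2\lambda}{g'}$ more carefully than the paper's own displayed computation, which drops a minus sign in the intermediate step.
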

\begin{proof}
	We give only the main ideas of the proof for the first identity, the other cases run along similar lines. 
	\begin{align*}
		&  2\overline{\partial}_{{\mathcal  I}}   \left\{ \frac{1}{g'(u)}  [{\bf I}_{a^+}^{1-\alpha ,g} f\mid_{\mathcal  S_{a , b,c,\mathcal  I }}](u+\mathcal  I s) +
		\frac{1}{h'(v)} [{\bf I}_{0^+}^{1-\beta, h} f\mid_{\mathcal  S_{a , b, c, \mathcal  I }}](r+\mathcal  I v)\right\} \\
		=&   \frac{\partial  }{\partial u} \left( \frac{1}{g'(u)}  [{\bf I}_{a^+}^{1-\alpha ,g} f\mid_{\mathcal  S_{a , b,c,\mathcal  I }}](u+\mathcal  I s) \right)
		+ \mathcal I  \frac{\partial  }{\partial u} \left(
		\frac{1}{h'(v)} [{\bf I}_{0^+}^{1-\beta, h} f\mid_{\mathcal  S_{a , b, c, \mathcal  I }}](r+\mathcal  I v) \right) \\
		=&  \frac{\partial  }{\partial u} \left( \frac{1}{g'(u)} \right)   [{\bf I}_{a^+}^{1-\alpha ,g} f\mid_{\mathcal  S_{a , b,c,\mathcal  I }}](u+\mathcal  I s)  + \mathcal I  \frac{\partial  }{\partial u} \left(
		\frac{1}{h'(v)} \right) [{\bf I}_{0^+}^{1-\beta, h} f\mid_{\mathcal  S_{a , b, c, \mathcal  I }}](r+\mathcal  I v)   \\
		&+  \frac{1}{g'(u)} \frac{\partial  }{\partial u}    [{\bf I}_{a^+}^{1-\alpha ,g} f\mid_{\mathcal  S_{a , b,c,\mathcal  I }}](u+\mathcal  I s)  
		+ \mathcal I \frac{1}{h'(v)}  \frac{\partial  }{\partial u}  
		[{\bf I}_{0^+}^{1-\beta, h} f\mid_{\mathcal  S_{a , b, c, \mathcal  I }}](r+\mathcal  I v)   \\
		= & 2\lambda  \left[  \frac{1}{ g'(u)  }   [{\bf I}_{a^+}^{1-\alpha ,g} f\mid_{\mathcal  S_{a , b,c,\mathcal  I }}](u+\mathcal  I s)  + 
		\mathcal I  \frac{1}{ h'(v)  }     [{\bf I}_{0^+}^{1-\beta, h} f\mid_{\mathcal  S_{a , b, c, \mathcal  I }}](r+\mathcal  I v) \right] \\
		& + ({_{RL}}D _{a^+0^+,{\mathcal  I} }^{(\alpha,\beta),(g,h)} f\mid_{{\mathcal  S_{a , b, c, \mathcal  I }}})(u+\mathcal  I v , r,s) ,\\
	\end{align*}
	since 
	$\displaystyle \frac{\partial}{\partial u} \left( \frac{1}{g'(u)} \right) = \frac{g''(u)}{(g'(u))^2} = 2\lambda \frac{1}{g'(u)}$ and $\displaystyle \frac{\partial  }{\partial v} \left( \frac{1}{h'(v)} \right) = \frac{h''(u)}{(h'(u) )^2} = 2\lambda \frac{1}{h'(u)}$. 
\end{proof}
Let us make the definition of Riemann-Liouville fractional slice monogenic functions, of order $(\alpha ,\beta)$ and with respect to $(g,h)$ on $\mathcal S_{a , b,c }$
\begin{definition}\label{Fract_s_m_function}
	Let $f\in AC^1(\mathcal S_{a,b,c}, \mathbb R_n)$ such that  
	\begin{align}\label{m1}
		&u\mapsto [{\bf I}_{a^+}^{1-\alpha,g } f\mid_{\mathcal S_{a , b,c, \mathcal I }}](u+ \mathcal I s), \quad  u\in [a,b], \nonumber \\
		&v\mapsto [{\bf I}_{0^+}^{1-\beta,h} f\mid_{\mathcal S_{a , b, c, \mathcal I  }}](r+ \mathcal I v ), \quad  v\in[0,c], 
	\end{align}
	are mappings of $C^1-$class. Then f will be called a Riemann-Liouville fractional slice monogenic functions, of order $(\alpha ,\beta)$ and with respect to $(g,h)$ on $\mathcal S_{a , b,c }$ if
	\begin{align*}
		& ({_{RL}}D _{a^+ 0^+,  \mathcal I }^{(\alpha,\beta),(g,h)} f\mid_{{\mathcal S_{a , b, c, \mathcal I}}})( u+ \mathcal  I v , r,s) =0 ,\quad  
		\forall  u+\mathcal I v \in  \mathcal  S_{a , b, c, \mathcal  I},
	\end{align*} 
	for all  $\mathcal I \in \mathbb S$. 
	
	The $\mathbb R_n$-right linear space of (left) Riemann-Liouville fractional slice monogenic functions, of order $(\alpha ,\beta)$ and with respect to $(g,h)$ on $\mathcal S_{a , b,c }$ is denoted by ${}_{{RL}}\mathcal {SM}^{(\alpha,\beta)}_{(g,h)}(\mathcal S_{a , b,c})$.
\end{definition}
The $\mathbb R_n$-left linear space of (right) Riemann-Liouville fractional slice monogenic functions, of order $(\alpha ,\beta)$ and  with respect to $(g,h)$ on $\mathcal S_{a , b,c }$ is denoted by ${}_{{RL}}\mathcal {SM}^{(\alpha,\beta)}_{(g,h), r} (\mathcal S_{a , b,c} )$ which consists of $f\in AC^1(\mathcal S_{a,b,c}, \mathbb R_n)$ satisfying \eqref{m1} and  
\begin{align*}
	& ({_{RL}}D _{a^+ 0^+,\mathcal I, r }^{(\alpha,\beta),(g,h)} f\mid_{{\mathcal S_{a , b, c, \mathcal I }}})(u+\mathcal I v , r,s) =0 ,\quad \forall u+\mathcal I v \in  \mathcal S_{a , b, c, \mathcal I},
\end{align*}
for all  $\mathcal I\in \mathbb S$.

\begin{remark}
	The kernels of the  operators ${_{RL}}D _{a^+c^-,{\mathcal I}}^{(\alpha,\beta),(g,h)}$, ${_{RL}}D _{b^-0^+, {\mathcal  I}}^{(\alpha,\beta),(g,h)}$, $ {_{RL}}D _{a^+c^-,{\mathcal  I} ,r}^{(\alpha,\beta),(g,h)}$ and 
	${_{RL}}D _{b^-0^+, {\mathcal  I},r }^{(\alpha,\beta),(g,h)}$ induce fractional slice monogenic function theories similar to ${}_{{RL}}\mathcal {SM}^{(\alpha,\beta)}_{(g,h)} (\mathcal S_{a , b,c} )$.
	
	On the other hand, the extension of \cite[Example 1]{GBS} to the fractional slice monogenic function theory can be used to give a non trivial element of $\mathcal {SM}^{(\alpha,\beta)}_{(g,h)} (\mathcal S_{a , b,c})$.
\end{remark}

\begin{remark}
	If $\lambda =0$ then functions $g$ and $h$ are given by $g(u)= \delta_1 u +\delta_2 $ for all $u\in[a,b ]$ and  $h(v)= \rho_1 v +\rho_2$ for all $v\in[0,c ]$,  where $\delta_k, \rho_k \in \mathbb R$ for  $k=1,2$ and $\delta_1, \rho_1> 0$. This case extends in the theory of slice monogenic functions the theory presented in \cite{GBS}.
	
	On the other hand, the case $\lambda\neq 0$ introduce a new fractional approach to a notion of slice monogenic functions with respect to the  pair $(g, h)$ given by $g(u)= \delta_1 e^{-2\lambda u} + \delta_2$ for all $u\in[a,b ]$ and  $h(v)= \rho_1 e^{-2\lambda v} + \rho_2$ for all $v\in[0,c ]$ where $\delta_k,\rho_k\in \mathbb R$ for $k=1,2$ with $-2\delta_1\lambda, -2\rho_1\lambda> 0$.
\end{remark}

\begin{proposition}\label{FracProp2}
	Suppose that $f\in AC^1(S_{a , b,c }, \mathbb H)$ satisfies (\ref{m1}). Then $f\in {}_{{RL}}\mathcal{SM}^{(\alpha,\beta)}_{(g,h)}(\mathcal S_{a , b, c })$ if and only if the mapping
	\begin{align*}
		q=u+\mathcal I  v \mapsto  \frac{1}{g'(u)}[{\bf I}_{a^+}^{1-\alpha,g} f\mid_{\mathcal S_{a , b,c,\mathcal I }}](u+\mathcal I s) +
		\frac{1}{h'(v)} [{\bf I}_{0^+}^{1-\beta,h} f\mid_{\mathcal S_{a , b, c, \mathcal I }}](r+\mathcal I v)
	\end{align*}
	belongs to $\mathcal{SM}_{\lambda}(\mathcal S_{a , b, c })$.
\end{proposition}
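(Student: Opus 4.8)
The plan is to take the map displayed in the statement as the candidate $\lambda$-slice monogenic function and to read the asserted equivalence directly off the first identity of Proposition~\ref{FracProp1}. For $\mathcal I\in\mathbb S$ set
$$
F_{\mathcal I}(u+\mathcal I v):=\frac{1}{g'(u)}\,[{\bf I}_{a^+}^{1-\alpha,g}f\mid_{\mathcal S_{a,b,c,\mathcal I}}](u+\mathcal I s)+\frac{1}{h'(v)}\,[{\bf I}_{0^+}^{1-\beta,h}f\mid_{\mathcal S_{a,b,c,\mathcal I}}](r+\mathcal I v)
$$
for $u+\mathcal I v\in\mathcal S_{a,b,c,\mathcal I}$, and let $F$ be the map on $\mathcal S_{a,b,c}$ with $F\mid_{\mathcal S_{a,b,c,\mathcal I}}=F_{\mathcal I}$; this is exactly the object whose membership in $\mathcal{SM}_\lambda(\mathcal S_{a,b,c})$ we must match with the vanishing of ${_{RL}}D_{a^+0^+,\mathcal I}^{(\alpha,\beta),(g,h)}f$.

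First I would check that $F\in C^1(\mathcal S_{a,b,c},\mathbb R_n)$, so that it is a legitimate candidate for membership in $\mathcal{SM}_\lambda(\mathcal S_{a,b,c})$. On each slice the first summand is the product of $1/g'$, which lies in $C^1([a,b],\mathbb R)$ because $g\in C^2([a,b],\mathbb R)$ with $g'>0$, with the mapping $u\mapsto[{\bf I}_{a^+}^{1-\alpha,g}f\mid_{\mathcal S_{a,b,c,\mathcal I}}](u+\mathcal I s)$, which is of class $C^1$ by the standing hypothesis \eqref{m1}; the second summand is treated identically, with $h$ in place of $g$ and $v$ in place of $u$, and it vanishes at $v=0$ since there its defining integral runs over $[0,0]$. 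Thus $F_{\mathcal I}$ is $C^1$ in $(u,v)$ on every slice, and I would then patch these slices together exactly as the analogous points are handled earlier in the paper.

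The substance of the proof is a single identity. Because $g$ and $h$ were chosen among the solutions $\delta_1,\delta_2$ of $y''+2\lambda y'=0$, one has $\partial_u\bigl(1/g'(u)\bigr)=2\lambda/g'(u)$ and $\partial_v\bigl(1/h'(v)\bigr)=2\lambda/h'(v)$, which is precisely the computation that produces the $\lambda$-term in the first identity of Proposition~\ref{FracProp1}. Expanding $\overline{\partial}_{\mathcal I}F_{\mathcal I}$ by the product rule and recognising the two fractional derivatives through \eqref{FracDer}, that identity rearranges into
$$
\bigl({_{RL}}D_{a^+0^+,\mathcal I}^{(\alpha,\beta),(g,h)}f\mid_{\mathcal S_{a,b,c,\mathcal I}}\bigr)(u+\mathcal I v,r,s)=2\bigl(\overline{\partial}_{\mathcal I}F_{\mathcal I}(u+\mathcal I v)+\lambda F_{\mathcal I}(u+\mathcal I v)\bigr),
$$
valid for every $u+\mathcal I v\in\mathcal S_{a,b,c,\mathcal I}$ and every $\mathcal I\in\mathbb S$. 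Its left-hand side is the expression whose vanishing defines ${}_{RL}\mathcal{SM}^{(\alpha,\beta)}_{(g,h)}(\mathcal S_{a,b,c})$ in Definition~\ref{Fract_s_m_function}, and its right-hand side is, up to the nonzero factor $2$, the expression whose vanishing defines $\mathcal{SM}_\lambda(\mathcal S_{a,b,c})$ in Definition~\ref{GeneralizedSM}, evaluated at $F$. Since $(\mathcal S_{a,b,c})_{\mathcal I}=\mathcal S_{a,b,c,\mathcal I}$, the left-hand side vanishes on all of $\mathcal S_{a,b,c,\mathcal I}$ for every $\mathcal I$ --- i.e. $f\in{}_{RL}\mathcal{SM}^{(\alpha,\beta)}_{(g,h)}(\mathcal S_{a,b,c})$ --- if and only if $\overline{\partial}_{\mathcal I}F_{\mathcal I}+\lambda F_{\mathcal I}\equiv 0$ on each $\mathcal S_{a,b,c,\mathcal I}$, i.e. $F\in\mathcal{SM}_\lambda(\mathcal S_{a,b,c})$; both implications then follow at once.

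The point I expect to require the most care is the regularity bookkeeping of the first step: showing that the slicewise $C^1$ maps $F_{\mathcal I}$ --- well defined in particular near $v=0$ --- assemble into a genuine $C^1$ function $F$ on the whole axially symmetric domain $\mathcal S_{a,b,c}$, as the statement $F\in\mathcal{SM}_\lambda(\mathcal S_{a,b,c})$ requires. The computation behind the displayed identity, by contrast, only repeats what is already carried out in the proof of Proposition~\ref{FracProp1}; the sole thing to watch there is that the imaginary unit $\mathcal I$ comes to multiply the $h$-fractional derivative, in agreement with the definition of ${_{RL}}D_{a^+0^+,\mathcal I}^{(\alpha,\beta),(g,h)}$.
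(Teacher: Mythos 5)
Your overall route---combine Definition \ref{Fract_s_m_function} with the first identity of Proposition \ref{FracProp1}---is exactly the paper's (its proof is the same one-line appeal to those two items), but the identity you extract from Proposition \ref{FracProp1} is not the one that proposition actually states, and the discrepancy sits precisely where the content of the proof lies. Write
$A(u)=\frac{1}{g'(u)}[{\bf I}_{a^+}^{1-\alpha,g} f\mid_{\mathcal S_{a,b,c,\mathcal I}}](u+\mathcal I s)$ and
$B(v)=\frac{1}{h'(v)}[{\bf I}_{0^+}^{1-\beta,h} f\mid_{\mathcal S_{a,b,c,\mathcal I}}](r+\mathcal I v)$, so that $F_{\mathcal I}=A+B$. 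The first identity of Proposition \ref{FracProp1} reads
\begin{align*}
2\lambda\bigl(A+\mathcal I B\bigr)+\bigl({_{RL}}D_{a^+0^+,\mathcal I}^{(\alpha,\beta),(g,h)} f\mid_{\mathcal S_{a,b,c,\mathcal I}}\bigr)(u+\mathcal I v,r,s)=2\,\overline{\partial}_{\mathcal I}\bigl(A+B\bigr),
\end{align*}
that is, ${_{RL}}Df=2\overline{\partial}_{\mathcal I}F_{\mathcal I}-2\lambda(A+\mathcal I B)$. The $\lambda$-term attached to $B$ necessarily carries the factor $\mathcal I$, because it is produced by $\mathcal I\partial_v\bigl(1/h'(v)\bigr)=2\lambda\,\mathcal I/h'(v)$ acting in the $\mathcal I\partial_v$ half of $\overline{\partial}_{\mathcal I}$; and it sits on the same side of the identity as ${_{RL}}Df$, so it enters with a minus sign once you solve for ${_{RL}}Df$. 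Your displayed identity ${_{RL}}Df=2\bigl(\overline{\partial}_{\mathcal I}F_{\mathcal I}+\lambda F_{\mathcal I}\bigr)$ therefore differs from the correct one by $2\lambda\bigl(2A+(1+\mathcal I)B\bigr)$, which is not identically zero unless $\lambda=0$.

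Consequently the equivalence you read off---${_{RL}}Df\equiv0$ iff $\overline{\partial}_{\mathcal I}F_{\mathcal I}+\lambda F_{\mathcal I}\equiv0$---is not what Proposition \ref{FracProp1} delivers. What it delivers is: ${_{RL}}Df\equiv0$ iff $\overline{\partial}_{\mathcal I}F_{\mathcal I}=\lambda(A+\mathcal I B)$, which is not the defining equation of $\mathcal{SM}_\lambda$ in Definition \ref{GeneralizedSM} applied to $F$ (wrong sign, and $A+\mathcal I B\neq F_{\mathcal I}$). In the affine case $\lambda=0$ the two conditions coincide and your argument closes; for $\lambda\neq0$ the central step fails as written, and an additional argument would be required to pass from the condition actually furnished by Proposition \ref{FracProp1} to membership of $F$ in $\mathcal{SM}_\lambda$. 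Your preliminary regularity bookkeeping for $F$ is fine and is in fact more careful than anything the paper records.
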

\begin{proof}
	Use Definition \ref{Fract_s_m_function} and Proposition \ref{FracProp1}.
\end{proof}

\begin{corollary} 
	Given $f\in AC^1(S_{a , b,c }, \mathbb H)$ satisfying (\ref{m1}) we see that $f \mid_{\mathcal S_{a , b, c , \mathcal I}} \in  \textrm{Ker} ({_{RL}}D _{a^+c^-,{\mathcal  I}}^{(\alpha,\beta),(g,h)})$ for all $\mathcal I\in \mathbb S^2$ if and only if 
	\begin{align*} 
		u+\mathcal I  v \mapsto  [{\bf I}_{a^+}^{1-\alpha,g} f\mid_{\mathcal S_{a , b,c,\mathcal I }}](u+\mathcal I s) +
		[{\bf I}_{c^-}^{1-\beta,h} f\mid_{\mathcal S_{a , b, c, \mathcal I }}](r+\mathcal I v), \quad \forall  u+\mathcal I  v\in \mathcal S_{a , b,c,\mathcal I}
	\end{align*}
	belongs to $ \mathcal{SM}_{\lambda}(\mathcal S_{a , b, c })$. In addition, $f \mid_{\mathcal S_{a , b, c , \mathcal I}} \in \textrm{Ker} ({_{RL}}D _{b^-0^+, {\mathcal  I} }^{(\alpha,\beta),(g,h)}  )$ for all $\mathcal I\in \mathbb S^2$ if and only if 
	\begin{align*} 
		u+\mathcal I  v \mapsto  [{\bf I}_{b^-}^{1-\alpha,g} f\mid_{\mathcal S_{a , b,c,\mathcal I }}](u+\mathcal I s) +
		[{\bf I}_{0^+}^{1-\beta,h} f\mid_{\mathcal S_{a , b, c, \mathcal I }}](r+\mathcal I v) , \quad \forall  u+\mathcal I  v\in \mathcal S_{a , b,c,\mathcal I }
	\end{align*}
	belongs to $ \mathcal{SM}_{\lambda}(\mathcal S_{a , b, c })$. Similarly, $f \mid_{\mathcal S_{a , b, c , \mathcal I}} \in  \textrm{Ker } ( {_{RL}}D _{b^-c^-, {\mathcal  I} }^{(\alpha,\beta),(g,h)}  )$ for all $\mathcal I\in \mathbb S^2$ iff 
	\begin{align*} 
		u+\mathcal I  v \mapsto  [{\bf I}_{b^-}^{1-\alpha,g} f\mid_{\mathcal S_{a , b,c,\mathcal I }}](u+\mathcal I s) +
		[{\bf I}_{c^-}^{1-\beta,h} f\mid_{\mathcal S_{a , b, c, \mathcal I }}](r+\mathcal I v) , \quad \forall  u+\mathcal I  v\in \mathcal S_{a , b,c,\mathcal I }
	\end{align*}
	belongs to $ \mathcal{SM}_{\lambda}(\mathcal S_{a , b, c })$. 
\end{corollary}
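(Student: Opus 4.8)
Each of the three equivalences is the exact counterpart --- for ${_{RL}}D_{a^+c^-,\mathcal I}^{(\alpha,\beta),(g,h)}$, ${_{RL}}D_{b^-0^+,\mathcal I}^{(\alpha,\beta),(g,h)}$ and ${_{RL}}D_{b^-c^-,\mathcal I}^{(\alpha,\beta),(g,h)}$ respectively --- of the equivalence proved in Proposition \ref{FracProp2} for ${_{RL}}D_{a^+0^+,\mathcal I}^{(\alpha,\beta),(g,h)}$, and the plan is to reproduce that proof in each of the three remaining cases. Fixing $\mathcal I\in\mathbb S$ and treating the first equivalence, I would introduce the auxiliary slice function that occurs inside $\overline{\partial}_{\mathcal I}\{\cdots\}$ on the right-hand side of the third identity of Proposition \ref{FracProp1}, namely
\[
\Phi_{\mathcal I}(u+\mathcal I v):=\frac{1}{g'(u)}[{\bf I}_{a^+}^{1-\alpha,g}f\mid_{\mathcal S_{a,b,c,\mathcal I}}](u+\mathcal I s)+\frac{1}{h'(v)}[{\bf I}_{c^-}^{1-\beta,h}f\mid_{\mathcal S_{a,b,c,\mathcal I}}](r+\mathcal I v),
\]
whose two summands depend respectively on $u$ only and on $v$ only. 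A preliminary step is to use the hypothesis \eqref{m1}, together with $g,h\in C^2$ and $g',h'>0$, to check that $\Phi_{\mathcal I}$ is of class $C^1$ on $\mathcal S_{a,b,c,\mathcal I}$, so that the assertion ``$\Phi_{\mathcal I}$ is the restriction of an element of $\mathcal{SM}_\lambda(\mathcal S_{a,b,c})$'' is meaningful in the sense of Definition \ref{GeneralizedSM}.

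The heart of the argument is then the third identity of Proposition \ref{FracProp1}, which exhibits $2\lambda\,[\,\cdots\,]+({_{RL}}D_{a^+c^-,\mathcal I}^{(\alpha,\beta),(g,h)}f\mid_{\mathcal S_{a,b,c,\mathcal I}})(u+\mathcal I v,r,s)$ as $2\,\overline{\partial}_{\mathcal I}\Phi_{\mathcal I}(u+\mathcal I v)$. Reading this as an identity of functions on $\mathcal S_{a,b,c,\mathcal I}$, the vanishing of ${_{RL}}D_{a^+c^-,\mathcal I}^{(\alpha,\beta),(g,h)}f\mid_{\mathcal S_{a,b,c,\mathcal I}}$ there becomes equivalent to $\Phi_{\mathcal I}$ solving the defining equation of $\mathcal{SM}_\lambda$ on that slice; this last equivalence is carried out exactly as in the proof of Proposition \ref{FracProp2} and of Corollary \ref{corSMLambda}, namely by absorbing the zeroth-order term through conjugation with $e^{u\lambda}$. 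Since $\mathcal I$ was arbitrary and $\mathcal S_{a,b,c}$ is axially symmetric, letting $\mathcal I$ range over $\mathbb S$ delivers the first equivalence. The second and third equivalences follow verbatim, using instead the fourth identity of Proposition \ref{FracProp1} (with the pair $({\bf I}_{b^-}^{1-\alpha,g},{\bf I}_{0^+}^{1-\beta,h})$) for ${_{RL}}D_{b^-0^+,\mathcal I}^{(\alpha,\beta),(g,h)}$, and the second identity (with $({\bf I}_{b^-}^{1-\alpha,g},{\bf I}_{c^-}^{1-\beta,h})$) for ${_{RL}}D_{b^-c^-,\mathcal I}^{(\alpha,\beta),(g,h)}$; in those two cases the right-hand side of the relevant identity carries an overall minus sign, which is immaterial since the $\mathcal{SM}_\lambda$-condition is the vanishing of an expression.

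I do not expect a genuine conceptual obstacle: the statement is a direct unwinding of Propositions \ref{FracProp1} and \ref{FracProp2}. The main difficulty will be organizational --- keeping track of which real variable ($u$ or $v$) is the active one in each one-sided partial fractional derivative, of the position of $\mathcal I$ inside the $2\lambda$-term as opposed to inside $\Phi_{\mathcal I}$, and of the sign differences distinguishing the $a^+$ from the $b^-$ (respectively $0^+$ from the $c^-$) one-sided operators --- together with the one-time verification of the $C^1$-regularity of the auxiliary function. Once those are settled, the three equivalences follow in parallel.
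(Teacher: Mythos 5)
Your proposal is correct and follows exactly the route the paper intends: its proof of this corollary is literally ``Similar to the previous proof,'' i.e.\ invoke the relevant identity of Proposition \ref{FracProp1} (the third, fourth and second, respectively, as you identify) together with the argument of Proposition \ref{FracProp2} and Corollary \ref{corSMLambda}. One small remark: your auxiliary function $\Phi_{\mathcal I}$ carries the weights $1/g'(u)$ and $1/h'(v)$, which is what Proposition \ref{FracProp1} actually produces and what Proposition \ref{FracProp2} uses, whereas the corollary as printed omits them --- your version is the one consistent with the rest of the paper.
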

\begin{proof}
	Similar to the previous proof.
\end{proof}

\begin{proposition}(Representation Formula)  Let  $f\in {}_{{RL}}\mathcal{SM}_{(g,h)}^{(\alpha,\beta)}(\mathcal S_{a , b, c })$, then 
	for every $x=u+\mathcal I_x v \in \mathcal S_{a,b,c}$ and  $\mathcal I \in\mathbb S$ we have that 
	{\small
		\[\begin{split}
			& \frac{1}{g'(u)} [   {\bf I}_{a^+}^{1-\alpha ,g } f\mid_{\mathcal S_{a , b,c,\mathcal  I_x }}](u+\mathcal  I_x s )    +
			\frac{1}{h'(v)} [{\bf I}_{0^+}^{1-\beta, h } f\mid_{\mathcal  S_{a , b, c, \mathcal  I_x }}](r+\mathcal  I_x  v )  \\
			= & \frac{1}{2}\left\{  \frac{1}{g' } \left[  \  {\bf I}_{a^+}^{1-\alpha,g} [
			f\mid_{\mathcal  S_{a , b,c,\mathcal  I }}
			-   \mathcal  I_x \mathcal  I  f\mid_{\mathcal  S_{a , b,c,  \mathcal  I  }}  ]   +     {\bf I}_{a^+}^{1-\alpha,g } [
			f\mid_{\mathcal  S_{a , b,c,-\mathcal  I }}  +   \mathcal  I_x \mathcal I
			f\mid_{\mathcal  S_{a , b,c,- \mathcal  I }} ]   \   \right]  \right\} (u+\mathcal I s ) \\
			&  +
			\frac {1}{2} \left\{   \frac{1}{h' } \left[   \   {\bf I}_{0^+}^{1-\beta,h} [
			f\mid_{   \mathcal  S  _{  a , b,c,\mathcal  I }}
			-    \mathcal  I_x \mathcal  I
			f\mid_{\mathcal  S_{a , b,c,\mathcal I }}]    +
			{\bf I}_{0^+}^{1-\beta,h }[
			f\mid_{\mathcal  S_{a , b,c,-\mathcal I  }} +   {\mathcal I_x} {\mathcal I}
			f\mid_{S_{a , b,c,-{\mathcal I} }}   ] \right]  \   \right\} (r+{\mathcal I }v ) .
		\end{split} \] }
\end{proposition}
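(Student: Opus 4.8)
The plan is to reduce the statement to the Representation Theorem for $\lambda$-slice monogenic functions applied to the auxiliary map furnished by Proposition~\ref{FracProp2}. Put
$$ q(u+\mathcal I v):=\frac{1}{g'(u)}[{\bf I}_{a^+}^{1-\alpha,g} f\mid_{\mathcal S_{a,b,c,\mathcal I}}](u+\mathcal I s)+\frac{1}{h'(v)}[{\bf I}_{0^+}^{1-\beta,h} f\mid_{\mathcal S_{a,b,c,\mathcal I}}](r+\mathcal I v),\qquad u+\mathcal I v\in\mathcal S_{a,b,c}. $$
Since $f\in {}_{{RL}}\mathcal{SM}^{(\alpha,\beta)}_{(g,h)}(\mathcal S_{a,b,c})$ satisfies \eqref{m1}, Proposition~\ref{FracProp2} gives $q\in\mathcal{SM}_\lambda(\mathcal S_{a,b,c})$.

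Next I would apply part~I of Proposition~\ref{cor-Rep-Split-lambda} to $q$ (using, as the paper already remarks, that the Representation Theorem carries over to the axially symmetric s-domain $\mathcal S_{a,b,c}$): for every $x=u+\mathcal I_x v\in\mathcal S_{a,b,c}$ and every $\mathcal I\in\mathbb S$,
$$ q(x)=\tfrac12\bigl(1-\mathcal I_x\mathcal I\bigr)\,q(u+\mathcal I v)+\tfrac12\bigl(1+\mathcal I_x\mathcal I\bigr)\,q(u-\mathcal I v). $$
By the definition of $q$ on the slice $\mathbb C_{\mathcal I_x}$, the left-hand side here is precisely the left-hand side of the claimed identity, so the whole matter comes down to rewriting the right-hand side in the displayed form.

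For that I would use that the operators ${\bf I}_{a^+}^{1-\alpha,g}$ and ${\bf I}_{0^+}^{1-\beta,h}$ have real-valued kernels and are therefore left $\mathbb R_n$-linear, so the idempotents $\tfrac12(1\mp\mathcal I_x\mathcal I)$ may be pushed through them; the scalars $1/g'(u)$ and $1/h'(v)$ are slice-independent and can be pulled out. Substituting the definitions of $q(u+\mathcal I v)$ and $q(u-\mathcal I v)=q(u+(-\mathcal I)v)$, grouping the ${\bf I}_{a^+}^{1-\alpha,g}$-terms and the ${\bf I}_{0^+}^{1-\beta,h}$-terms separately, and distributing $1$ and $\mp\mathcal I_x\mathcal I$ inside each integral then yields exactly the two bracketed blocks on the right-hand side of the proposition.

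The one point requiring care — and it is bookkeeping rather than a genuine obstacle — is the consistent handling of the two half-slices $\mathbb C_{\pm\mathcal I}$ and of the evaluation points: in the term $q(u-\mathcal I v)$ the fixed parameters $r,s$ are carried along with the sign change $\mathcal I\mapsto-\mathcal I$, so $f\mid_{\mathcal S_{a,b,c,-\mathcal I}}$ must be read as the restriction of $f$ to the opposite half-slice and the argument displayed as $(u+\mathcal I s)$, resp.\ $(r+\mathcal I v)$, with the matching sign convention. One also verifies that the real-linearity of the fractional integrals is precisely what legitimizes moving the Clifford constants $1\mp\mathcal I_x\mathcal I$ in and out of ${\bf I}_{a^+}^{1-\alpha,g}$ and ${\bf I}_{0^+}^{1-\beta,h}$. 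No convergence or regularity issue appears beyond those already built into $AC^1(\mathcal S_{a,b,c},\mathbb R_n)$ and \eqref{m1}.
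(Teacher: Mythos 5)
Your proposal is correct and follows essentially the same route as the paper: both define the auxiliary map $q$ built from $\frac{1}{g'}{\bf I}_{a^+}^{1-\alpha,g}$ and $\frac{1}{h'}{\bf I}_{0^+}^{1-\beta,h}$, invoke Proposition~\ref{FracProp2} to place it in $\mathcal{SM}_\lambda(\mathcal S_{a,b,c})$, and then apply the Representation Theorem \eqref{represtheoremlambda} of Proposition~\ref{cor-Rep-Split-lambda}. Your additional remarks on pushing the idempotents $\tfrac12(1\mp\mathcal I_x\mathcal I)$ through the real-kernel integral operators and on the sign bookkeeping for the $-\mathcal I$ half-slice simply make explicit steps the paper leaves implicit.
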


\begin{proof}
	Use the mappings 
	\begin{align*}
		u+{\mathcal I}  v \mapsto &   \frac{1}{g'(u)} [{\bf I}_{a^+}^{1-\alpha,g} f\mid_{\mathcal  S_{a , b,c, \mathcal  I }}](u+ \mathcal  I s ) +
		\frac{1}{h'(v)} [{\bf I}_{0^+}^{1-\beta ,h} f\mid_{\mathcal  S_{a , b, c,  \mathcal  I  }}](r+\mathcal  I v) , \\
		u-{\mathcal I}  v \mapsto &   \frac{1}{g'(u)} [{\bf I}_{a^+}^{1-\alpha,g} f\mid_{\mathcal  S_{a , b,c,-\mathcal  I }}](u+ \mathcal  I s ) +
		\frac{1}{h'(v)}  [{\bf I}_{0^+}^{1-\beta,h } f\mid_{\mathcal  S_{a , b, c, -\mathcal  I  }}](r+\mathcal  I v),  
	\end{align*}
	Proposition \ref{FracProp2} and   identity \eqref{represtheoremlambda} of Proposition \ref{cor-Rep-Split-lambda}. 
\end{proof}

\begin{proposition} (Splitting lemma) 
	Given   $f\in {}_{{RL}}\mathcal{SM}_{(g,h)}^{(\alpha,\beta)}(\mathcal S_{a , b, c}) $  and  $\mathcal I = \mathcal I_1 \in \mathbb  S$.  Let $\mathcal I_2,\dots , \mathcal I_n\in \mathbb S$ such that $\mathcal I_r \mathcal I_s + \mathcal I_s \mathcal I_r = - 2\delta_{rs}$.
	Then there exist $2^{n-1}$ many holomorphic functions $F_A : U_{\mathcal I}  \to \mathbb C_{\mathcal I}$ such that
	\begin{align*}
		\frac{1}{g'(\Re z)}[{\bf I}_{a^+}^{1-\alpha,g} f\mid_{\mathcal S_{a , b,c,\mathcal I }}](\Re z+\mathcal I s) +
		\frac{1}{h'(\Im z)}  [{\bf I}_{0^+}^{1-\beta,h} f\mid_{\mathcal S_{a , b, c, \mathcal I }}](r+\mathcal I \Im z)
		=\sum^{n-1}_{|A|=0} e^{-\lambda \Re z}F_A(z) {\mathcal I}_A, \end{align*}
	where $ {\mathcal I}_A={\mathcal I}_{i_1}\cdots{\mathcal I}_{i_s}$ and   $A=\{i_1\cdots  i_s\}$ is a subset of $\{2,\dots, n\}$, with $i_1<\dots< i_s$, or, $ I_{\emptyset}= 1$.
\end{proposition}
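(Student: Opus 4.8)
The plan is to reduce the statement to the Splitting Lemma for $\lambda$-slice monogenic functions already recorded in Proposition~\ref{cor-Rep-Split-lambda}(II). First I would introduce the auxiliary mapping
\[
\Phi(u+\mathcal I v):=\frac{1}{g'(u)}[{\bf I}_{a^+}^{1-\alpha,g} f\mid_{\mathcal S_{a , b,c,\mathcal I }}](u+\mathcal I s)
+\frac{1}{h'(v)} [{\bf I}_{0^+}^{1-\beta,h} f\mid_{\mathcal S_{a , b, c, \mathcal I }}](r+\mathcal I v),
\qquad u+\mathcal I v\in\mathcal S_{a,b,c}.
\]
The regularity hypothesis \eqref{m1} built into Definition~\ref{Fract_s_m_function}, together with $g,h\in C^2$ and $g',h'>0$, guarantees that $\Phi\in C^1(\mathcal S_{a,b,c},\mathbb R_n)$, so Proposition~\ref{FracProp2} applies: since $f\in{}_{{RL}}\mathcal{SM}^{(\alpha,\beta)}_{(g,h)}(\mathcal S_{a,b,c})$, we obtain $\Phi\in\mathcal{SM}_\lambda(\mathcal S_{a,b,c})$.

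Next I would restrict $\Phi$ to the slice $\mathbb C_{\mathcal I}$, writing a point of $\mathbb C_{\mathcal I}$ as $z=u+\mathcal I v$ with $u=\Re z$, $v=\Im z$, and apply Proposition~\ref{cor-Rep-Split-lambda}(II) with $U=\mathcal S_{a,b,c}$. That Splitting Lemma — whose own proof runs through Corollary~\ref{corSMLambda} (multiply by $e^{u\lambda}$ to pass to $\mathcal{SM}(U)$) and Lemma~\ref{SplittingLemma} — produces $2^{n-1}$ holomorphic functions $F_A\in\textrm{Hol}(U_{\mathcal I})$, indexed by subsets $A=\{i_1<\dots<i_s\}\subset\{2,\dots,n\}$, such that
\[
\Phi_{\mathcal I}(z)=\sum_{|A|=0}^{n-1} e^{-\lambda\Re z}F_A(z)\,\mathcal I_A,\qquad \mathcal I_A=\mathcal I_{i_1}\cdots\mathcal I_{i_s},\ \ \mathcal I_\emptyset=1.
\]
Since by construction $\Phi_{\mathcal I}(z)$ is exactly the left-hand side of the asserted identity, with $\Re z=u$ and $\Im z=v$ in place of the scalar arguments of the fractional integrals, this is the claim.

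The main obstacle, such as it is, is the verification in the first step that $\Phi$ has the $C^1$-regularity required to invoke Proposition~\ref{FracProp2}: this is precisely why the condition \eqref{m1} on the mappings $u\mapsto[{\bf I}_{a^+}^{1-\alpha,g}f\mid_{\mathcal S_{a,b,c,\mathcal I}}](u+\mathcal I s)$ and $v\mapsto[{\bf I}_{0^+}^{1-\beta,h}f\mid_{\mathcal S_{a,b,c,\mathcal I}}](r+\mathcal I v)$ was imposed, and one uses $g,h\in C^2$ with $g',h'>0$ to handle the prefactors $1/g'$, $1/h'$. The remaining points are bookkeeping: identifying $\Re z$ and $\Im z$ on $\mathbb C_{\mathcal I}$ with the real arguments $u,v$, and keeping track of the orientation convention $\mathcal S_{a,b,c,\mathcal I}\subset\{u+\mathcal I v: v\ge 0\}$. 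No new analytic estimate is needed, since the substantive work was already carried out in Proposition~\ref{FracProp1} (which converts the vanishing of ${}_{RL}D^{(\alpha,\beta),(g,h)}_{a^+0^+,\mathcal I}f$ into the $\lambda$-slice monogenicity of $\Phi$) and in the $\lambda$-slice Splitting Lemma.
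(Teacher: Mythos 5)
Your proposal is correct and follows essentially the same route as the paper: the paper's own (one-line) proof likewise invokes Proposition~\ref{FracProp2} to convert $f$ into the $\lambda$-slice monogenic mapping $\Phi$ and then applies identity \eqref{serieslambda} of Proposition~\ref{cor-Rep-Split-lambda}. Your additional attention to the $C^1$-regularity needed to apply Proposition~\ref{FracProp2} is a reasonable elaboration of details the paper leaves implicit, not a different argument.
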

\begin{proof} 
	Use Proposition \ref{FracProp2} and  identity \eqref{serieslambda} of Proposition \ref{cor-Rep-Split-lambda}. 
\end{proof}

\begin{corollary}\label{Fract1.4lambda}  Some properties of $\mathcal{SM}_{(g,h)}^{(\alpha,\beta)}(\mathcal S_{a , b, c  }) $
	Let $f\in \mathcal{SM}_{(g,h)}^{(\alpha,\beta)}(\mathcal S_{a , b, c  })$. Suppose $u_0+ \mathcal I v_0 \in \mathcal S_{a , b, c}$
	and $r>0$ such that  $D_r:= \{ u+\mathcal I v \in \mathbb C_{\mathcal I} \ \mid \  (u-u_0)^2+ (v-v_0)^2\leq r^2 \} \subset \mathcal S_{a , b, c , \mathcal I }$.    
	
	\noindent
	i)  There exist a sequence $(C_n)_{n\geq 0}$ of elements of $\mathbb R_n$ such that 
	\begin{align}\label{fracsmseries}
		& \frac{1}{g'(\Re z)}[{\bf I}_{a^+}^{1-\alpha,g} f\mid_{\mathcal S_{a , b,c,\mathcal I }}](\Re z+\mathcal I s) +
		\frac{1}{h'(\Im z)}  [{\bf I}_{0^+}^{1-\beta,h} f\mid_{\mathcal S_{a , b, c, \mathcal I }}](r+\mathcal I \Im z)
		\nonumber  \\
		= & \sum_{n=0}^{\infty}  e^{-\lambda \Re z}(z- u_0-\mathcal I v_0)^n C_n, \end{align}
	for all $z\in \textrm{int}(D_r) $, the interior of $D_r$ contained in $\mathbb C_{\mathcal I}$.
	
	\noindent
	ii)  For any $z\in \textrm{int}(D_r)$ we see that 
	{\small
		\begin{align*}
			&\frac{1}{g'(\Re z)}  [{\bf I}_{a^+}^{1-\alpha,g} f\mid_{\mathcal S_{a , b,c,\mathcal I }}](\Re z+\mathcal I s) +
			\frac{1}{h'(\Im z)}   [{\bf I}_{0^+}^{1-\beta,h} f\mid_{\mathcal S_{a , b, c, \mathcal I }}](r+\mathcal I \Im z)
			\\ 
			= & \frac{1}{2\pi i}\int_{ \partial D_r } e^{ \lambda (\Re w- \Re z)} (w-z)^{-1} d_w\sigma(\mathcal I)  \left\{  \frac{1}{g'(\Re w)} [{\bf I}_{a^+}^{1-\alpha,g} f\mid_{\mathcal S_{a , b,c,\mathcal I }}](\Re w+\mathcal I s)  \right.  \\
			&  \left.  \  \  \   \   +
			\frac{1}{h'(\Im w)} [{\bf I}_{0^+}^{1-\beta,h} f\mid_{\mathcal S_{a , b, c, \mathcal I }}](r+\mathcal I \Im w) \right\},
	\end{align*}}
	where $\partial D_r$ the boundary of $D_r$ is contained in $\mathbb C_{\mathcal I}$. 
	
	\noindent
	iii) Let $ \Gamma $ be a closed piecewise $C^1-$curve, homotopic to a point, contained in $\textrm{int} (\mathcal S_{a , b, c, \mathcal I}).$ Then 
	\begin{align*} 
		& \int_{ \Gamma }   e^{\lambda \Re w} d_w\sigma(\mathcal I)
		\left\{ \frac{1}{g'(\Re w)}  [{\bf I}_{a^+}^{1-\alpha,g} f\mid_{\mathcal S_{a , b,c,\mathcal I }}](\Re w+\mathcal I s) +
		\frac{1}{h'(\Im w)}   [{\bf I}_{0^+}^{1-\beta,h} f\mid_{\mathcal S_{a , b, c, \mathcal I }}](r+\mathcal I \Im w) \right\} \\
		& =0. 
	\end{align*}
	
	\noindent
	iv)  If the set point 
	\begin{align*}
		\{ z\in \mathcal S_{a , b, c ,\mathcal I }  \ \mid \ h'(\Im z)   [{\bf I}_{a^+}^{1-\alpha,g} f\mid_{\mathcal S_{a , b,c,\mathcal I }}](\Re z+\mathcal I s) = -   
		g'(\Re z) [{\bf I}_{0^+}^{1-\beta,h} f\mid_{\mathcal S_{a , b, c, \mathcal I }}](r+\mathcal I \Im z) \}
	\end{align*}
	has an accumulation point in $\mathcal S_{a , b, c ,\mathcal I }$ then 
	\begin{align*}h'(\Im z)   [{\bf I}_{a^+}^{1-\alpha,g} f\mid_{\mathcal S_{a , b,c,\mathcal I }}](\Re z+\mathcal I s) = -   
		g'(\Re z) [{\bf I}_{0^+}^{1-\beta,h} f\mid_{\mathcal S_{a , b, c, \mathcal I }}](r+\mathcal I \Im z), \quad \forall z\in \mathcal S_{a , b, c ,\mathcal I }.
	\end{align*}
	
	\noindent
	v) Given  $\ell\in AC^1(  \mathcal S_{a , b, c  }, \mathbb R_n)$. Suppose that the mapping  
	\begin{align*}
		w\mapsto \frac{1}{g'(\Re w)}  [{\bf I}_{a^+}^{1-\alpha,g} \ell \mid_{\mathcal S_{a , b,c,\mathcal I }}](\Re w+\mathcal I s) +
		\frac{1}{h'(\Im w)}   [{\bf I}_{0^+}^{1-\beta,h} \ell \mid_{\mathcal S_{a , b, c, \mathcal I }}](r+\mathcal I \Im w), 
	\end{align*} 
	for all $w\in \mathcal S_{a , b, c ,\mathcal I }$, belongs to $C(\mathcal S_{a , b, c ,\mathcal I}, \mathbb R_n)$ and satisfies that 
	\begin{align*} \int_{\Gamma} d_w\sigma(\mathcal I)   e^{\lambda \Re w  } \left\{ 
		\frac{1}{g'(\Re w)}  [{\bf I}_{a^+}^{1-\alpha,g} \ell\mid_{\mathcal S_{a , b,c,\mathcal I }}](\Re w+\mathcal I s) +
		\frac{1}{h'(\Im w)}   [{\bf I}_{0^+}^{1-\beta,h} \ell\mid_{\mathcal S_{a , b, c, \mathcal I }}](r+\mathcal I \Im w)  \right\}
		=0,
	\end{align*}
	for any closed, homotopic to a point and  piecewise $C^1$ curve $\Gamma\subset \mathcal S_{a , b,c,\mathcal I}$ and for all ${\mathcal I}\in \mathbb S $. Then $\ell \in \mathcal{SM}_{(g,h)}^{(\alpha,\beta)}(\mathcal S_{a , b, c})$.
\end{corollary}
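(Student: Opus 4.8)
The plan is to derive all five assertions from the single equivalence established in Proposition~\ref{FracProp2}, which identifies membership in the Riemann--Liouville fractional slice monogenic class with membership of an associated function in $\mathcal{SM}_\lambda(\mathcal S_{a,b,c})$. Concretely, I would introduce
\[ q_{\mathcal I}(z):=\frac{1}{g'(\Re z)}[{\bf I}_{a^+}^{1-\alpha,g} f\mid_{\mathcal S_{a,b,c,\mathcal I}}](\Re z+\mathcal I s) +\frac{1}{h'(\Im z)}[{\bf I}_{0^+}^{1-\beta,h} f\mid_{\mathcal S_{a,b,c,\mathcal I}}](r+\mathcal I \Im z) \]
for $z=u+\mathcal I v\in\mathcal S_{a,b,c,\mathcal I}$ and let $q$ be the resulting map on $\mathcal S_{a,b,c}$. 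Since $f\in AC^1(\mathcal S_{a,b,c},\mathbb R_n)$ satisfies~\eqref{m1} and $g,h\in C^2$ with $g',h'>0$, this $q$ is of class $C^1$ along each slice, and Proposition~\ref{FracProp2} gives $q\in\mathcal{SM}_\lambda(\mathcal S_{a,b,c})$. Every item of the corollary is then a direct translation, for $q$, of a property of $\lambda$-slice monogenic functions recorded in Propositions~\ref{cor-Rep-Split-lambda} and~\ref{propertieslambda}.

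For~(i) I would apply part~II (Splitting Lemma) of Proposition~\ref{cor-Rep-Split-lambda} to $q$, so that $e^{\lambda\Re z}q_{\mathcal I}(z)=\sum_{|A|=0}^{n-1}F_A(z)\,\mathcal I_A$ on $\textrm{int}(D_r)$ with each $F_A$ holomorphic on $D_r$; then, Taylor-expanding each $F_A$ about the centre $u_0+\mathcal I v_0\in\mathbb C_{\mathcal I}$ as $F_A(z)=\sum_{k\ge 0}a_{A,k}(z-u_0-\mathcal I v_0)^k$ and setting $C_k:=\sum_{A}a_{A,k}\,\mathcal I_A\in\mathbb R_n$, one obtains~\eqref{fracsmseries}; the powers $(z-u_0-\mathcal I v_0)^k$ lie in $\mathbb C_{\mathcal I}$ and hence commute with the coefficients $a_{A,k}$, which is what makes this recombination into a single $\mathbb R_n$-valued sequence legitimate. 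Assertion~(ii) is then the $\lambda$-Cauchy reproducing formula of Proposition~\ref{propertieslambda} written for $q$ on the disc $D_r$, whose closure is contained in $\mathcal S_{a,b,c,\mathcal I}$ by hypothesis.

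Assertions~(iii), (iv) and~(v) are the three remaining items of Proposition~\ref{propertieslambda} read off for $q$. Item~(iii) is the vanishing of $\int_\Gamma e^{\lambda\Re w}d_w\sigma(\mathcal I)\,q_{\mathcal I}(w)$ over closed null-homotopic piecewise $C^1$ curves. For~(iv) I would observe that, after multiplying by $g'(\Re z)h'(\Im z)>0$, the equation $q_{\mathcal I}(z)=0$ becomes
\[ h'(\Im z)\,[{\bf I}_{a^+}^{1-\alpha,g} f\mid_{\mathcal S_{a,b,c,\mathcal I}}](\Re z+\mathcal I s)=-\,g'(\Re z)\,[{\bf I}_{0^+}^{1-\beta,h} f\mid_{\mathcal S_{a,b,c,\mathcal I}}](r+\mathcal I \Im z), \]
and, since $e^{\lambda\Re z}\ne 0$, the zero set appearing in~(iv) coincides with $\{z\in\mathcal S_{a,b,c,\mathcal I}:e^{\lambda\Re z}q_{\mathcal I}(z)=0\}$; the identity principle for $\mathcal{SM}_\lambda$ then forces $q\equiv 0$ on $\mathcal S_{a,b,c}$, which is the asserted identity. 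For~(v) the integral hypothesis imposed on the auxiliary map attached to $\ell$ is precisely the Morera condition of Proposition~\ref{propertieslambda}, so that map belongs to $\mathcal{SM}_\lambda(\mathcal S_{a,b,c})$; the converse direction of Proposition~\ref{FracProp2} then yields $\ell\in\mathcal{SM}_{(g,h)}^{(\alpha,\beta)}(\mathcal S_{a,b,c})$ --- here one also notes that the $C^1$-regularity inherited by that auxiliary map (it is $\lambda$-slice monogenic, hence smooth along slices) forces~\eqref{m1} for $\ell$, because the two fractional integrals enter the map through a function of $u$ alone and a function of $v$ alone.

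I do not expect a genuine obstacle: the whole argument is organized bookkeeping around the reduction of the first paragraph. The step demanding the most care is~(i), where the slice-wise holomorphic components supplied by the Splitting Lemma must be Taylor-expanded and then recombined into one $\mathbb R_n$-valued sequence, which works precisely because the centre $u_0+\mathcal I v_0$ lies in $\mathbb C_{\mathcal I}$; the other places deserving attention are the zero-set reformulation in~(iv) and the recovery of~\eqref{m1} for $\ell$ in~(v), but neither is difficult.
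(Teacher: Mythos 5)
Your proposal is correct and follows exactly the paper's route: the authors likewise apply Proposition~\ref{FracProp2} to the auxiliary map $u+\mathcal I v\mapsto \frac{1}{g'(u)}[{\bf I}_{a^+}^{1-\alpha,g} f\mid_{\mathcal S_{a,b,c,\mathcal I}}](u+\mathcal I s)+\frac{1}{h'(v)}[{\bf I}_{0^+}^{1-\beta,h} f\mid_{\mathcal S_{a,b,c,\mathcal I}}](r+\mathcal I v)$ and then read off items (i)--(v) from \eqref{serieslambda} of Proposition~\ref{cor-Rep-Split-lambda} and from Proposition~\ref{propertieslambda}. Your write-up merely supplies details the paper leaves implicit (the Taylor expansion and recombination in (i), the zero-set reformulation in (iv), and the recovery of \eqref{m1} for $\ell$ in (v)).
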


\begin{proof} 
	In  \eqref{serieslambda} of Proposition \ref{cor-Rep-Split-lambda} and in Proposition \ref{propertieslambda} use the mapping 
	\begin{align*}
		u+\mathcal I  v \mapsto  \frac{1}{g'(u)}[{\bf I}_{a^+}^{1-\alpha,g} f\mid_{\mathcal S_{a , b,c,\mathcal I }}](u+\mathcal I s) +
		\frac{1}{h'(v)} [{\bf I}_{0^+}^{1-\beta,h} f\mid_{\mathcal S_{a , b, c, \mathcal I }}](r+\mathcal I v)
	\end{align*}
	and  Proposition \ref{FracProp2}.
\end{proof}

\begin{remark}
	Given $\mathcal I\in \mathbb S$,  the previous corollary shows how  some properties of the slice monogenic functions are extending to mapgings given by    
	$$\frac{1}{g'(\Re z)}[{\bf I}_{a^+}^{1-\alpha,g} f\mid_{\mathcal S_{a , b,c,\mathcal I }}](\Re z+\mathcal I s) +
	\frac{1}{h'(\Im z)} [{\bf I}_{0^+}^{1-\beta,h} f\mid_{\mathcal S_{a , b, c, \mathcal I }}](r+\mathcal I \Im z)
	, \quad z\in  \mathcal S_{a , b, c, \mathcal I },$$
	for all $f\in {}_{{RL}}\mathcal{SM}^{\alpha,\beta}_{(g,h)}(\mathcal S_{a , b, c})$, based on well-known results of the theory of slice monogenic functions. Moreover, using Representation formula the previous formulas are extended to 
	$$\frac{1}{g'(u)}[{\bf I}_{a^+}^{1-\alpha, g} f](u+\mathcal I_x s) +\frac{1}{h'(v)} [{\bf I}_{0^+}^{1-\beta,h} f](r+\mathcal I_x v),$$
	for all  $u+\mathcal I_x v \in \mathcal S_{a , b, c,} $ and $f\in \mathcal{SM}^{(\alpha,\beta)}_{(g,h)}(\mathcal S_{a , b, c})$. 
	
	Taking into account the necessary hypothesis for the fundamental theorem of fractional calculus, applying the fractional derivatives ${D}_{a^+}^{1-\alpha,g}$ and ${D}_{0^+}^{1-\beta,h}$ on $h'(v)[{\bf I}_{a^+}^{1-\alpha, g} f](u+\mathcal I s) +g'(u) [{\bf I}_{0^+}^{1-\beta,h} f](r+\mathcal I v)$, we obtain formulas for $h'(v) f (u+\mathcal I s) + g'(u) f (r+\mathcal I v)$. For example, we will see the consequence of \eqref{fracsmseries} for $h'(v) f (u+\mathcal I s) + g'(u) f (r+\mathcal I v)$ repeating the previous ideas.     
	
	From \eqref{fracsmseries} and Representation Theorem we have that 
	\begin{align*}
		&\frac{1}{g'(u)}[{\bf I}_{a^+}^{1-\alpha,g} f\mid_{\mathcal S_{a , b,c,\mathcal I_x }}](u+\mathcal I_x s) +
		\frac{1}{h'(v)}  [{\bf I}_{0^+}^{1-\beta, h} f\mid_{\mathcal S_{a , b, c, \mathcal I_x }}](r+\mathcal I_x u)  \\
		= & \frac{1}{2} (1 - \mathcal I_x \mathcal I)
		\sum_{n=0}^{\infty}  ( u + \mathcal I v - u_0-\mathcal I v_0)^n C_n + \frac{1}{2} (1 + \mathcal I_x \mathcal I  )
		\sum_{n=0}^{\infty}  ( u - \mathcal I v - u_0-\mathcal I v_0)^n C_n,
	\end{align*} 
	where  $(C_n)_{n\geq 0}$ is  a sequence  of elements of $\mathbb R_n$,  for every $(u- u_0)^2 +  (v- v_0)^2 <r^2 $  and  $\mathcal I_x \in\mathbb S$. 
	
	Finally, writing the factor $g'(u) h'(v)$ and applying  the real fractional derivatives ${D}_{a^+}^{1-\alpha, g} \circ {D}_{0^+}^{1-\beta,h}$ on both sides, then the fundamental theorem of fractional calculus, \eqref{FundTheorem}, allows to see that  
	\begin{align*}
		& h'(v)  f(u+\mathcal  I_x s) +  g'(u) f(r+\mathcal  I_x  v ) \\
		= & \frac{1}{2} (1 - \mathcal I_x \mathcal I)
		{D}_{a^+}^{1-\alpha,g} \circ {D}_{0^+}^{1-\beta,h}\left[ \sum_{n=0}^{\infty} g'(u)h'(v) ( u + \mathcal I v - u_0-\mathcal I v_0)^nC_n \right]  \\
		&+ \frac{1}{2} (1 + \mathcal I_x \mathcal I)
		{D}_{a^+}^{1-\alpha, g} \circ {D}_{0^+}^{1-\beta,h}  \left[ \sum_{n=0}^{\infty}  g'(u)h'(v) ( u - \mathcal I v - u_0-\mathcal I v_0)^n C_n \right] \\
		& - {D}_{0^+}^{1-\beta,h} [h'(v)] [{\bf I}_{a^+}^{1-\alpha,g } f\mid_{\mathcal S_{a , b,c,\mathcal  I_x }}](u+\mathcal  I_x s) -
		{D}_{a^+}^{1-\alpha,g }  [g'(u)][{\bf I}_{0^+}^{1-\beta, h} f\mid_{\mathcal  S_{a , b, c, \mathcal  I_x }}](r+\mathcal  I_x v),
	\end{align*} 
	for all  $ (u- u_0)^2 +  (v- v_0)^2 <r^2 $ and   $\mathcal I_x \in\mathbb S$, where  $u$ is the real variable of the fractional derivative ${D}_{a^+}^{1-\alpha,g} $ and $v$ is the variable  of $ \circ {D}_{0^+}^{1-\beta,h}$.
	
	So the previous computations can be repeated for the missing  statements presented in Corollary \ref{Fract1.4lambda}  to obtain some formulas  for the mapping $(u+\mathcal  I_x v) \mapsto  h'(v)  f(u+\mathcal  I_x s) +  g'(u) f(r+\mathcal  I_x  v ) $ for all $ (u- u_0)^2 +  (v- v_0)^2 <r^2 $ and   $\mathcal I_x \in\mathbb S$.
\end{remark}

\subsection{Caputo fractional slice monogenic functions with respect to a pair of real valued functions}
With the notations introduced in the previous section, and partially relying on similar reasoning, in this section we study Caputo fractional slice monogencic functions with respect to a pair of real valued functions.

\begin{definition}\label{Fract_s_m_functionCaputo} Let $r \in [a,b] $ and $s\in[0,c]$. 
	The fractional derivatives in the Caputo sense of $f\in AC^1(S_{a , b,c }, \mathbb H)$, satisfying \eqref{m1}, on the left (on the right) with order induced by $(\alpha,\beta)$ with respect to $(g,h)$ and associated to the slice $\mathbb C_{\mathcal I}$ are      
	\begin{align*}
		& ({_{C}}D _{a^+0^+,{\mathcal  I} }^{(\alpha,\beta),(g,h)} f\mid_{{ \mathcal  S_{a , b, c, {\mathcal  I} }}})(u+{\mathcal  I}v , r,s) := ({_{C}}D _{a^+  }^{ \alpha,g }   f\mid_{\mathcal  S_{a , b,c,{\mathcal  I} }}) (u+{\mathcal  I}s) + {\mathcal  I} ({_{C}}D _{0^+  }^{ \beta, h }f\mid_{\mathcal  S_{a , b,c,{\mathcal  I} }})(r+{\mathcal I}v)  ,\\
		&({_{C}}D _{b^-c^-, {\mathcal  I}}^{(\alpha,\beta),(g,h)} f\mid_{{\mathcal  S_{a , b, c, {\mathcal  I } }}})(u+\mathcal  I v, r,s):=  ({_{C}}D _{b^-  }^{\alpha, g }   f\mid_{\mathcal   S_{a , b,c,{\mathcal  I} }}) (u+{\mathcal  I}s) +\mathcal  I  ({_{C}}D 
		_{c^-  }^{\beta, h}   f\mid_{\mathcal    S_{a , b,c,\mathcal  I  }})(r+{\mathcal  I} v), 
	\end{align*}
	respectively, where $u$ and $v$ are the variables of partial fractional derivation. The right versions of the previous operators are given by 
	\begin{align*}
		& ({_{C}}D _{a^+0^+,{\mathcal  I} ,r }^{(\alpha,\beta),(g,h)} f\mid_{{ \mathcal  S_{a , b, c, {\mathcal  I} }}})(u+{\mathcal  I}v , r,s) := ({_{C}}D _{a^+  }^{ \alpha,g }   f\mid_{\mathcal  S_{a , b,c,{\mathcal  I} }}) (u+{\mathcal  I}s) + ({_{C}}D _{0^+  }^{ \beta, h }f\mid_{\mathcal  S_{a , b,c,{\mathcal  I} }})(r+{\mathcal I}v)  {\mathcal  I} ,\\
		&({_{C}}D _{b^-c^-, {\mathcal  I}, r }^{(\alpha,\beta),(g,h)} f\mid_{{\mathcal  S_{a , b, c, {\mathcal  I } }}})(u+\mathcal  I v, r,s):=  ({_{C}}D _{b^-  }^{\alpha, g }   f\mid_{\mathcal   S_{a , b,c,{\mathcal  I} }}) (u+{\mathcal  I}s) + ({_{C}}D 
		_{c^-  }^{\beta, h}   f\mid_{\mathcal    S_{a , b,c,\mathcal  I  }})(r+{\mathcal  I} v)  \mathcal  I . 
	\end{align*}
	Operators  ${_{C}}D _{a^+c^-,{\mathcal  I} }^{(\alpha,\beta),(g,h)}$,    
	${_{C}}D _{b^-0^+, {\mathcal  I}}^{(\alpha,\beta),(g,h)}$,  ${_{C}}D _{a^+c^-,{\mathcal  I} ,r}^{(\alpha,\beta),(g,h)}$ and    
	${_{C}}D _{b^-0^+, {\mathcal  I}, r}^{(\alpha,\beta),(g,h)}$ are defined in similar way.
	
	Note that if $(u,v) =(r,s)$ then 
	\begin{align*}
		& ({_{C}}D _{a^+0^+,{\mathcal  I} }^{(\alpha,\beta),(g,h)} f\mid_{{ \mathcal  S_{a , b, c, {\mathcal  I} }}})(r+{\mathcal  I}s , r,s)  = ({_{C}}D _{a^+  }^{ \alpha,g }   f\mid_{\mathcal  S_{a , b,c,{\mathcal  I} }}) (r+{\mathcal  I}s) + {\mathcal  I} ({_{C}}D _{0^+  }^{ \beta ,h}f\mid_{\mathcal  S_{a , b,c,{\mathcal  I} }})(r+{\mathcal I}s) ,\\
		&=:  ({_{C}}D _{a^+0^+,{\mathcal  I} }^{(\alpha,\beta),(g,h)} f\mid_{{ \mathcal  S_{a , b, c, {\mathcal  I} }}})(r+{\mathcal  I}s ), \end{align*}
	and the missing operators follow the same notation. 
\end{definition}

\begin{remark}
	The previous operators are quaternionic extensions of \eqref{FracDerCaputo}. Even more, an interesting combination of fractional derivatives in the Riemann-Liouville and Caputo sense such that 
	\begin{align*}
		& ({_{RL}}D _{a^+  }^{ \alpha ,g}   f\mid_{\mathcal  S_{a , b,c,{\mathcal  I} }}) (u+{\mathcal  I}s) +   \mathcal  I 
		({_{C}}D _{0^+  }^{ \beta ,h}   f\mid_{\mathcal  S_{a , b,c,{\mathcal  I} }}) (r+{\mathcal  I}v) ,
		\\
		& ({_{RL}}D _{b^-  }^{ \alpha ,g}   f\mid_{\mathcal  S_{a , b,c,{\mathcal  I} }}) (u+{\mathcal  I}s) +   \mathcal  I 
		({_{C}}D _{0^+  }^{ \beta ,h}   f\mid_{\mathcal  S_{a , b,c,{\mathcal  I} }}) (r+{\mathcal  I}v) ,
		\\
		& 
		({_{C}}D _{b^-  }^{ \alpha ,g}   f\mid_{\mathcal  S_{a , b,c,{\mathcal  I} }}) (u+{\mathcal  I}s)  
		+{\mathcal  I} ({_{RL}}D _{c^-  }^{ \beta, h }  
		f\mid_{\mathcal  S_{a , b,c,{\mathcal  I} }})(r+{\mathcal I}v) ,  
	\end{align*}
	and some others similar operators can be considered. 
\end{remark}

\begin{definition}
	Let $f\in AC^1(\mathcal S_{a , b,c }, \mathbb R_n)$ satisfying \eqref{m1}. Then $f$ will be called a Caputo fractional slice monogenic   function of order $(\alpha, \beta)$, with respect to $(g,h)$ on $\mathcal S_{a , b,c }$ if
	\begin{align*}
		& ({}_{{ C}}D _{a^+,\mathcal I }^{(\alpha,\beta), (g,h)} f\mid_{{\mathcal  S_{a , b, c, \mathcal  I }}})(u+\mathcal  I v , r,s) =0 ,\quad  \forall u+\mathcal  I v \in \mathcal  S_{a , b, c, \mathcal I},
	\end{align*}
	for all  $\mathcal  I \in \mathbb S$. 
	
	The $\mathbb R_n$-right linear space of fractional monogenic functions, in Caputo sense is denoted by ${}_{{ C}}\mathcal{SM}^{(\alpha,\beta)}_{(g,h)}(\mathcal S_{a , b, c})$.
	
	On the other hand, by ${}_{{ C}}\mathcal{SM}^{(\alpha,\beta)}_{(g,h), r}(\mathcal S_{a , b, c})$ we denote the $\mathbb R_n$-left linear space of right-fractional slice monogenic  functions, in Caputo sense, on $\mathcal S_{a , b,c }$ and it consists of $f\in AC^1(\mathcal S_{a , b,c }, \mathbb R_n)$ satisfying \eqref{m1} and 
	\begin{align*}
		& ({_{{ C}}}D _{a^+,\mathcal I,r }^{(\alpha,\beta),(g,h)} f\mid_{{\mathcal S_{a , b, c, \mathcal I }}})(u+\mathcal I v , r,s) =0 ,\quad  \forall u+\mathcal I v \in {\mathcal S_{a , b, c, \mathcal I}},
	\end{align*}
	for all $\mathcal I \in \mathbb S.$ 
\end{definition}

\begin{proposition} 
	Suppose that $f\in AC^1(\mathcal S_{a , b,c }, \mathbb R_n)$ satisfies \eqref{m1}. Define the operator 
	\begin{align*}
		\mathcal H(f) (u+\mathcal I v, r,s):= ({\bf I}_{a^+}^{1-\alpha, g} f)(u+\mathcal I s) + ({\bf I}_{0^+}^{1-\beta, h} f)(r+ \mathcal I v). 
	\end{align*}
	Then 
	\begin{align*}
		({}_C D _{a^+, 0^+}^{(\alpha, \beta), (g,h )} \mathcal H(f)  (u+\mathcal I v, r,s)  = & \mathcal H\left[  ({}_{RL} D _{a^+, 0^+}^{(\alpha, \beta), (g,h )} \mathcal H(f)  (u+\mathcal I v, r,s)\right] \\
		& - {\bf I}_{0^+}^{1-\beta, h}  [1]  ({}_{RL}D _{a^+}^{\alpha, g} f)(u+\mathcal I s) - {\bf I}_{a^+}^{1-\alpha, g} [1] \mathcal I ({}_{RL}D _{0^+}^{\beta, h } f)(r+ \mathcal I v), 
	\end{align*}
	for all $u+\mathcal I v\in \mathcal S_{a , b,c }$ and $\mathcal I\in \mathbb S$. 
\end{proposition}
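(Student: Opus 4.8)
The plan is to unwind the two slice operators into their one--dimensional pieces and then invoke the Riemann--Liouville/Caputo relations \eqref{R_LandC1}--\eqref{R_LandC2}, in the same spirit as the proof of Proposition~\ref{FracProp1}. Abbreviate $\Phi(u):=({\bf I}_{a^+}^{1-\alpha,g}f)(u+\mathcal{I}s)$ and $\Psi(v):=({\bf I}_{0^+}^{1-\beta,h}f)(r+\mathcal{I}v)$, so that $\mathcal{H}(f)(u+\mathcal{I}v,r,s)=\Phi(u)+\Psi(v)$, where $\Phi$ is constant in the second real variable and $\Psi$ is constant in the first. First I would apply Definition~\ref{Fract_s_m_functionCaputo}: since the Caputo derivative annihilates any summand that is constant in the variable of differentiation, the $u$--component of $({}_C D_{a^+,0^+}^{(\alpha,\beta),(g,h)}\mathcal{H}(f))$ reduces to $({}_C D_{a^+}^{\alpha,g}\Phi)=({}_C D_{a^+}^{\alpha,g}\,{\bf I}_{a^+}^{1-\alpha,g}f)$, which by \eqref{R_LandC1} equals $({\bf I}_{a^+}^{1-\alpha,g}\,{}_{RL}D_{a^+}^{\alpha,g}f)$; the $v$--component is treated identically through \eqref{R_LandC2}. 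Hence the left-hand side equals $({\bf I}_{a^+}^{1-\alpha,g}\,{}_{RL}D_{a^+}^{\alpha,g}f)(u+\mathcal{I}s)+\mathcal{I}\,({\bf I}_{0^+}^{1-\beta,h}\,{}_{RL}D_{0^+}^{\beta,h}f)(r+\mathcal{I}v)$.

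Next I would carry out the same splitting for $G:=({}_{RL}D_{a^+,0^+}^{(\alpha,\beta),(g,h)}\mathcal{H}(f))$ using Definition~\ref{FDQ}. The only difference from the Caputo computation is that the Riemann--Liouville derivative does not kill the constant summand: ${}_{RL}D_{a^+}^{\alpha,g}[c]=c\,{}_{RL}D_{a^+}^{\alpha,g}[1]$, and symmetrically in the second variable. So $G$ equals $({}_C D_{a^+,0^+}^{(\alpha,\beta),(g,h)}\mathcal{H}(f))$ together with the two cross contributions $\Psi(s)\,{}_{RL}D_{a^+}^{\alpha,g}[1]$ in the first variable and $\mathcal{I}\,\Phi(r)\,{}_{RL}D_{0^+}^{\beta,h}[1]$ in the second; along the way I would also record the commutation ${}_{RL}D_{a^+}^{\alpha,g}[{\bf I}_{a^+}^{1-\alpha,g}\psi]={\bf I}_{a^+}^{1-\alpha,g}[{}_{RL}D_{a^+}^{\alpha,g}\psi]$, which follows from \eqref{R_LandC1} because $({\bf I}_{a^+}^{1-\alpha,g}\psi)(a)=0$, together with the elementary evaluations of ${\bf I}_{a^+}^{1-\alpha,g}[{}_{RL}D_{a^+}^{\alpha,g}1]$ and ${\bf I}_{0^+}^{1-\beta,h}[{}_{RL}D_{0^+}^{\beta,h}1]$.

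Finally I would apply $\mathcal{H}$ to $G$, writing $G=G_u+\mathcal{I}\,G_v$ with $G_u$ depending only on the first real variable and $G_v$ on the second, and using linearity of the two fractional integrals so that each constant-in-a-variable piece of $G$ is pulled out multiplied by ${\bf I}_{a^+}^{1-\alpha,g}[1]$ or by ${\bf I}_{0^+}^{1-\beta,h}[1]$. Comparing the outcome with the expression for the left-hand side obtained in the first step, the two cross contributions that appeared at the Riemann--Liouville stage are precisely the source of the subtracted terms ${\bf I}_{0^+}^{1-\beta,h}[1]\,({}_{RL}D_{a^+}^{\alpha,g}f)(u+\mathcal{I}s)$ and ${\bf I}_{a^+}^{1-\alpha,g}[1]\,\mathcal{I}\,({}_{RL}D_{0^+}^{\beta,h}f)(r+\mathcal{I}v)$ in the statement, which yields the claimed identity. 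I expect the only real difficulty to be the bookkeeping: at each application of a fractional operator one must track which summand is constant with respect to which of the variables $u,v,r,s$, and whether a trace is being taken along imaginary part $s$ or real part $r$; once this is organised the verification is routine.
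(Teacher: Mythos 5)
Your first step coincides with the paper's proof: unwind the Caputo slice operator, observe that $\partial_u$ annihilates $\Psi$ and $\partial_v$ annihilates $\Phi$, and invoke \eqref{R_LandC1}--\eqref{R_LandC2} to obtain
\[
({}_C D _{a^+, 0^+}^{(\alpha, \beta), (g,h )} \mathcal H(f))(u+\mathcal I v, r,s)=({\bf I}_{a^+}^{1-\alpha, g}\, {}_{RL}D _{a^+}^{\alpha, g} f)(u+\mathcal I s) + \mathcal I\, ({\bf I}_{0^+}^{1-\beta, h}\,{}_{RL}D _{0^+}^{\beta, h}f)(r+ \mathcal I v).
\]
The gap is in your second and third steps, and it comes from taking the (admittedly garbled) notation of the statement at face value. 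You parse the bracketed term as $\mathcal H$ applied to $G:={}_{RL}D_{a^+0^+,\mathcal I}^{(\alpha,\beta),(g,h)}\mathcal H(f)$, i.e.\ you hit $\mathcal H(f)$ with the Riemann--Liouville slice operator and then apply $\mathcal H$ a second time. As your own step two shows, $G$ equals the left-hand side plus the two cross contributions, so $\mathcal H(G)=\mathcal H(\mathrm{LHS})+\mathcal H(\text{cross})$; but $\mathcal H(\mathrm{LHS})\neq \mathrm{LHS}$, since $\mathcal H$ is far from the identity. Concretely, $\mathcal H(G)$ contains the iterated integral ${\bf I}_{a^+}^{1-\alpha,g}\bigl[{\bf I}_{a^+}^{1-\alpha,g}[{}_{RL}D_{a^+}^{\alpha,g}f]\bigr]={\bf I}_{a^+}^{2-2\alpha,g}[{}_{RL}D_{a^+}^{\alpha,g}f]$ (and its $\beta$ analogue), which is not of the form ${\bf I}_{a^+}^{1-\alpha,g}[{}_{RL}D_{a^+}^{\alpha,g}f]$ and is not removed by the two subtracted terms. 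So the final ``comparison'' is not routine bookkeeping: the identity, as you have parsed it, is false.

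What the paper actually proves --- and what the displayed formula must mean despite the stray inner $\mathcal H$ --- is that the argument of the outer $\mathcal H$ is the Riemann--Liouville slice derivative of $f$ itself, namely $W:=({}_{RL}D_{a^+}^{\alpha,g}f)(u+\mathcal I s)+\mathcal I\,({}_{RL}D_{0^+}^{\beta,h}f)(r+\mathcal I v)$, with no second layer of integration inside. A single application of $\mathcal H$ to $W$, splitting each integral into the summand that depends on the integration variable and the summand that is constant in it, yields exactly ${\bf I}_{a^+}^{1-\alpha,g}[{}_{RL}D_{a^+}^{\alpha,g}f]+\mathcal I\,{\bf I}_{0^+}^{1-\beta,h}[{}_{RL}D_{0^+}^{\beta,h}f]$ plus the terms ${\bf I}_{0^+}^{1-\beta,h}[1]\,({}_{RL}D_{a^+}^{\alpha,g}f)$ and ${\bf I}_{a^+}^{1-\alpha,g}[1]\,\mathcal I\,({}_{RL}D_{0^+}^{\beta,h}f)$, which gives the claimed identity. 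If you replace your $G$ by $W$, the cross-term analysis you already carried out closes the proof; the auxiliary commutation ${}_{RL}D_{a^+}^{\alpha,g}[{\bf I}_{a^+}^{1-\alpha,g}\psi]={\bf I}_{a^+}^{1-\alpha,g}[{}_{RL}D_{a^+}^{\alpha,g}\psi]$ you record is then not needed at all.
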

\begin{proof} From direct computation and making use of the identities \eqref{R_LandC1} and \eqref{R_LandC2} we have that
	\begin{align*}
		& ({}_C D _{a^+, 0^+}^{(\alpha, \beta), (g,h )} \mathcal H(f)  (u+\mathcal I v, r,s)\\ = & \left[{\bf I}_{a^+}^{1-\alpha, g}     \frac{1 }{ g' }  \frac{\partial }{\partial u} \mathcal H(f) (u+\mathcal I v, r,s)  \right] + \mathcal I  \left[{\bf I}_{0^+}^{1-\beta, h}     \frac{1 }{h'} \frac{\partial}{\partial v} \mathcal H(f) (u+\mathcal I v, r,s)  \right] \\
		= & \left[{\bf I}_{a^+}^{1-\alpha, g} \frac{1 }{g'} \frac{\partial}{\partial u}    
		({\bf I}_{a^+}^{1-\alpha, g} f) (u+\mathcal I s) \right] + \mathcal I  \left[{\bf I}_{0^+}^{1-\beta, h}\frac{1}{h'}\frac{\partial }{  \partial v } ({\bf I}_{0^+}^{1-\beta, h} f) (r+ \mathcal I v) \right] \\
		= & {\bf I}_{a^+}^{1-\alpha, g} ({}_{RL}D _{a^+}^{\alpha, g} f) (u+\mathcal I s) + \mathcal I {\bf I}_{0^+}^{1-\beta, h}({}_{RL}D _{0^+}^{\beta, h}f) (r+ \mathcal I v), 
	\end{align*}
	for all $u+\mathcal I v\in \mathcal S_{a , b,c }$ and $\mathcal I\in \mathbb S$. Therefore, 
	\begin{align*}
		({}_C D _{a^+, 0^+}^{(\alpha, \beta), (g,h )} \mathcal H(f)  (u+\mathcal I v, r,s)  = & \mathcal H \left[
		({}_{RL}D _{a^+}^{\alpha, g } f)  (u+\mathcal I s) + \mathcal I ({}_{RL}D _{0^+}^{\beta, h } f) (r+ \mathcal I v) \right] \\
		& - {\bf I}_{0^+}^{1-\beta, h} [1] ({}_{RL}D _{a^+}^{\alpha, g } f)  (u+\mathcal I s) - {\bf I}_{a^+}^{1-\alpha, g} [1] \mathcal I({}_{RL}D _{0^+}^{\beta, h } f)(r+\mathcal I v), 
	\end{align*}
	for all $u+\mathcal I v\in \mathcal S_{a , b,c }$ and $\mathcal I\in \mathbb S$. 
\end{proof}

So we obtain a characterization of the elements of ${}_{{ C}}\mathcal{SM}^{(\alpha,\beta)}_{(g,h)}(\mathcal S_{a , b, c})$,
in terms of the slice monogenic operator in the Riemann-Loiuville sense as follows:

\begin{corollary} 
	$f\in {}_{{ C}}\mathcal{SM}^{(\alpha,\beta)}_{(g,h)}(\mathcal S_{a , b, c})$ if and only if 
	\begin{align}\label{last}
		\mathcal H\left[({}_{RL} D_{a^+, 0^+}^{(\alpha, \beta), (g,h)} \mathcal H(f)(u+\mathcal I v, r,s)\right] = {\bf I}_{0^+}^{1-\beta, h}[1]({}_{RL}D_{a^+}^{\alpha, g}f) (u+\mathcal I s) + {\bf I}_{a^+}^{1-\alpha, g}[1] \mathcal I ({}_{RL}D _{0^+}^{\beta, h}f)(r+ \mathcal I v), 
	\end{align}
	for all $u+\mathcal I v\in \mathcal S_{a , b,c }$ and $\mathcal I\in \mathbb S$. 
	
	On the other hand, if $f\in {}_{{ C}}\mathcal{SM}^{(\alpha,\beta)}_{(g,h)}(\mathcal S_{a , b, c})$ we have  
	\begin{align*}
		& {}_{RL}{D}_{a^+}^{1-\alpha, g} \circ {}_{RL}{D}_{0^+}^{1-\beta, h} \left\{ \mathcal H\left[({}_{RL} D_{a^+, 0^+}^{(\alpha, \beta), (g,h )} \mathcal H(f)  (u+\mathcal I v, r,s)\right] \right\} \\
		= & {}_{RL}{D}_{a^+}^{1-\alpha, g} ({}_{RL}D_{a^+}^{\alpha, g} f)(u+\mathcal I s) + \mathcal I {}_{RL}{D}_{0^+}^{1-\beta, h} ({}_{RL}D_{0^+}^{\beta, h}f)(r+ \mathcal I v), 
	\end{align*}
	for all $u+\mathcal I v\in \mathcal S_{a , b,c}$ and $\mathcal I\in \mathbb S$. 
\end{corollary}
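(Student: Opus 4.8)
The statement is a direct consequence of the Proposition immediately preceding it, so the plan is essentially a rearrangement together with one appeal to the fundamental theorem of fractional calculus.

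For the equivalence I would first record that, by the Definition of Caputo fractional slice monogenic functions and the Riemann--Liouville/Caputo relations \eqref{R_LandC1}--\eqref{R_LandC2}, membership $f\in{}_{C}\mathcal{SM}^{(\alpha,\beta)}_{(g,h)}(\mathcal{S}_{a,b,c})$ is equivalent to the vanishing of $({}_{C}D_{a^+,0^+}^{(\alpha,\beta),(g,h)}\mathcal H(f))(u+\mathcal I v,r,s)$ for every $u+\mathcal I v\in\mathcal{S}_{a,b,c}$ and every $\mathcal I\in\mathbb S$; but this is exactly the quantity standing on the left-hand side of the identity furnished by the preceding Proposition. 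Inserting that identity and transposing the two ``lower-order'' terms ${\bf I}_{0^+}^{1-\beta,h}[1]\,({}_{RL}D_{a^+}^{\alpha,g}f)(u+\mathcal I s)$ and ${\bf I}_{a^+}^{1-\alpha,g}[1]\,\mathcal I\,({}_{RL}D_{0^+}^{\beta,h}f)(r+\mathcal I v)$ to the opposite side produces \eqref{last}, and reading the same chain of equalities in reverse gives the converse.

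For the second assertion I would start from \eqref{last} (valid because $f$ is Caputo fractional slice monogenic) and apply the composition ${}_{RL}D_{a^+}^{1-\alpha,g}\circ{}_{RL}D_{0^+}^{1-\beta,h}$ to both sides. On the right the two summands depend only on $u$ and only on $v$ respectively, so the two one-variable fractional derivatives act on separate slots; the fundamental theorem of Riemann--Liouville fractional calculus \eqref{FundTheorem}, applied to the constant function $1$, gives ${}_{RL}D_{0^+}^{1-\beta,h}({\bf I}_{0^+}^{1-\beta,h}[1])=1$ and ${}_{RL}D_{a^+}^{1-\alpha,g}({\bf I}_{a^+}^{1-\alpha,g}[1])=1$, which absorbs the explicit prefactors and leaves precisely ${}_{RL}D_{a^+}^{1-\alpha,g}({}_{RL}D_{a^+}^{\alpha,g}f)(u+\mathcal I s)+\mathcal I\,{}_{RL}D_{0^+}^{1-\beta,h}({}_{RL}D_{0^+}^{\beta,h}f)(r+\mathcal I v)$, as asserted. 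The whole argument is formal; the only point that needs genuine care --- and hence the main, though minor, obstacle --- is the bookkeeping of which real variable each operator acts on: the ``$a^+$'' operators act in the $u$-slot with the second coordinate frozen at $s$ and the ``$0^+$'' operators in the $v$-slot with the first coordinate frozen at $r$, so one must pair each ${}_{RL}D$ with the ${\bf I}$ of the same order on the same variable before invoking \eqref{FundTheorem} term by term, and one should check that the $C^1$-regularity hypotheses \eqref{m1} on $f$ are exactly what licenses differentiating the Riemann--Liouville integrals under the integral sign and using \eqref{R_LandC1}--\eqref{R_LandC2}.
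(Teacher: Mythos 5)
Your proposal is correct and follows essentially the same route as the paper's own (two-sentence) proof: the equivalence comes from transposing the two correction terms in the identity of the preceding Proposition, whose left-hand side is exactly the quantity that vanishes for Caputo fractional slice monogenic $f$, and the second assertion comes from applying ${}_{RL}D_{a^+}^{1-\alpha,g}\circ{}_{RL}D_{0^+}^{1-\beta,h}$ to both sides of \eqref{last} and using the fundamental theorem \eqref{FundTheorem}. You merely spell out the variable bookkeeping and the identification of membership in ${}_{C}\mathcal{SM}^{(\alpha,\beta)}_{(g,h)}(\mathcal S_{a,b,c})$ with the vanishing of $({}_{C}D_{a^+,0^+}^{(\alpha,\beta),(g,h)}\mathcal H(f))$, both of which the paper leaves implicit.
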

\begin{proof} 
	The firs fact follows from the previous proposition. If we apply ${D}_{a^+}^{1-\alpha, g} \circ {D}_{0^+}^{1-\beta, h} $ on both sides  of \eqref{last} the second fact follows.
\end{proof}

\section*{Statements and Declarations}
\subsection*{Funding} This work was partially supported by Instituto Polit\'ecnico Nacional (grant numbers SIP20241638, SIP20241237)).
\subsection*{Competing Interests} The authors declare that they have no competing interests regarding the publication of this paper.
\subsection*{Ethics approval and consent to participate} Not applicable.
\subsection*{Consent for publication} Not applicable.
\subsection*{Data availability} Not applicable.
\subsection*{Materials availability} Not applicable.
\subsection*{Code availability} Not applicable.
\subsection*{Author contribution} All authors contributed equally to the study, read and approved the final version of the submitted manuscript.

\subsection*{ORCID}
\noindent
Jos\'e Oscar Gonz\'alez-Cervantes: https://orcid.org/0000-0003-4835-5436\\
Juan Bory-Reyes: https://orcid.org/0000-0002-7004-1794




\begin{thebibliography}{999}
	\bibitem{ACDS} Alpay, D., Colombo, F., Diki, K., Sabadini, I. Poly slice monogenic functions, Cauchy formulas and the $PS$-functional calculus. {\em J. Operator Theory}, 88 {\bf 2022}, no. 2, 309-364. 
	\bibitem{BDS} Brackx, F.,  Delanghe, R., Sommen, F. Clifford Analysis, (Research Notes in Mathematics, Vol. 76), Boston, London, Melbourne: Pitman Advanced Publishing Company, {\bf 1982}. 
	\bibitem{CGS} Colombo, F., Gonz\'alez-Cervantes, J. O., Sabadini, I. Comparison of the various notions of slice monogenic functions and their variations. {\em In AIP Conference Proceedings}, Vol. 1389, No. 1, {\bf 2011}, 264-267. American Institute of Physics.
	\bibitem{CKPS} Colombo, F., Kimsey, D. P., Pinton, S., Sabadini, I. Slice monogenic functions of a Clifford variable via the $S$-functional calculus. {\em Proc. Amer. Math. Soc. Ser. B}, 8 {\bf 2021}, 281--296. 
	\bibitem{CSS1} Colombo, F., Sabadini, I., Struppa, D.C.  Slice monogenic functions, {\em Israel J. Math.}, 171, {\bf 2009},
	385--403.
	\bibitem{CSS2} Colombo, F.,  Sabadini, I.,  Struppa, D.C. An extension theorem for slice monogenic functions and some of its consequences, {\em Israel J. Math.}, 177, {\bf 2010},  369--389.
	\bibitem{CSS3} Colombo, F., Sabadini, I., Struppa, D.C. Slice monogenic functions. {\em In: Noncommutative Functional Calculus. Progress in Mathematics}, vol 289. Springer, Basel {\bf 2011}.
	\bibitem{CS1} Colombo,  F.,  Sabadini, I. A structure formula for slice monogenic functions and some of its consequences, Hypercomplex Analysis, {\em Trends in Mathematics, Birkhauser}, {\bf 2009}, 101--114. 
	\bibitem{CS2} Colombo, f.,  Sabadini, I. The Cauchy formula with s-monogenic kernel and a functional calculus for noncommuting operators, {\em J. Math. Anal. Appl.}, 373, {\bf 2011}, 655--679.
	\bibitem{CSSS} Colombo, F., Sabadini, I., Sommen, F.,  Struppa, D. C. Analysis of Dirac systems and computational algebra, {\em Progress in Mathematical Physics}, Vol. 39, Birkhauser Boston, {\bf 2004}.
	\bibitem{CDR} Cnudde, L., De Bie, H., Ren, G.. Algebraic approach to slice monogenic functions. {\em Complex Anal. Oper. Theory} 9 {\bf 2015}, no. 5, 1065--1087.
	\bibitem{CTOP} Coloma, N.,  Di Teodoro, A.,  Ochoa-Tocachi, D.,  Ponce, F. Fractional Elementary Bicomplex Functions in the Riemann-Liouville Sense. {\em Adv. Appl. Clifford Alg.} 31,  4. {\bf 2021}, 63--29. 
	\bibitem{DM} Delgado, B. B.,  Mac{\'i}as-D{\'i}az, J.E.  On the General Solutions of Some Non-Homogeneous Div-Curl Systems with Riemann-Liouville and Caputo Fractional Derivatives. {\em Fractal Fract}, 3, 3, {\bf 2021},  1045--1100. 
	\bibitem{DSS} Delanghe, R., Somman, F., Soucek, V. Clifford Algebra and Spinor-Valued Functions, Dorderecht, Boston, London: Kluwer Academic Publishers, {\bf 1992}.
	\bibitem{GM}  Gilbert, J. E.,  Murray, M. A. M. Clifford algebras and Dirac operators in harmonic analysis, Cambridge Studies in Advanced Mathematics, Vol. 26, Cambridge University Press, Cambridge, 1991.
	\bibitem{G}   Gonz\'alez-Cervantes, J. O. On some quaternionic generalized slice regular functions.  {\em Advances in Applied Clifford Algebras} 32 (2022), no. 3, Paper No. 36, 17 pp.
	\bibitem{GB1} Gonz\'alez-Cervantes, J. O., Bory-Reyes. J.  A quaternionic fractional Borel-Pompeiu type formula. {\em Fractal}, 30, No. 1 {\bf 2022} 2250013 (15 pages).
	\bibitem{GB2}  Gonz{\'a}lez-Cervantes J. O., Bory-Reyes, J. A bicomplex $(\vartheta,\varphi)-$weighted fractional Borel-Pompeiu type formula. {\em J. Math. Anal. Appl.}, 520, No. 2,  {\bf  2023}, 2025-2034. 
	\bibitem{GB3}  Gonz\'alez-Cervantes, J. O., Bory-Reyes. J.  A fractional Borel-Pompeiu type formula and a related fractional $\psi$-Fueter operator with respect to a vector-valued function. {\em Math. Methods Appl. Sci.} 46 {\bf 2023}, No. 2, 2012--2022.
	\bibitem{GBS} Gonz\'alez-Cervantes, J. O., Bory-Reyes,  J.,  Sabadini, I. Fractional Slice Regular Functions of a Quaternionic Variable. {\em Results Math} 79, 32, {\bf 2024}. 
	\bibitem{GHS} Gurlebeck, K., Habetha, K., Spro{\ss}ig, W. Holomorphic functions in the plane and $n-$dimensional space, Birkhauser Verlag, Basel, {\bf 2008}.
	\bibitem{KV} K{\"a}hler, U.  Vieira, N. Fractional Clifford analysis, {\em Hypercomplex analysis: new perspectives and applications. Trends in mathematics}, 520, 2,  {\bf 2014}. 191-201. 
	\bibitem{KST} Kilbas, A. A., Srivastava, H. M.,  Trujillo, J. J. Theory and Applications of Fractional Differential Equations.
	Hypercomplex analysis: new perspectives and applications.  Trends in mathematics. Amsterdam. North-Holland Mathematics Studies, 204. Elsevier Science B.V. {\bf 2006}.  
	\bibitem{JAA} Jarad, F., Alqudah, M.A., Abdeljawad, T. On more generalized form of proportional fractional operators. {\em Open Math},
	18, 167--176 (2020)
\bibitem{JARH}  Jarad, F.  Abdeljawad, T.,   Rashid, S.,  and Hammouch Z.  More properties of the proportional
fractional integrals and derivatives of a function with respect to another function. {\em Adv. Difference Equ.} 2020, Paper No. 303, 16 pp.
	\bibitem{MR} Miller, K. S.,  Ross, B.  An Introduction to the Fractional Calculus and Fractional Differential Equations. New York. A Wiley Interscience Publication. John Wiley \& Sons, Inc. {\bf 1993}.
	\bibitem{OS} Oldham, K. B. and Spanier, J. The Fractional Calculus, New York, Dover Publ. Inc. {\bf 2006}.
	\bibitem{O} Ortigueira, M. D. Fractional calculus for scientists and engineers. Lecture Notes in Electrical Engineering, 84, Dordrecht. Springer. {\bf 2011}.
	\bibitem{PBBB} Pe\~na P{\'e}rez, Y.,  Abreu Blaya, R.,  {\'A}rciga Alejandre, M. P., Bory Reyes, J. Biquaternionic reformulation of a fractional monochromatic Maxwell system. {\em Adv. High Energy Phys}. 13, 2. {\bf 2020}, 71--162.
	\bibitem{P} Podlubny, I. Fractional differential equations. An introduction to fractional derivatives, fractional differential equations, to methods of their solution and some of their applications. San Diego, CA. Mathematics in Science and Engineering, 198. Academic Press, Inc. {\bf 1999}.
	\bibitem{SKM} Samko, S.G., Kilbas, A. A., Marichev, O.I.  Fractional Integrals and Derivatives. Theory and Applications. New York. Gordon and Breach Sci. Publ. London. {\bf 1993}.
	\bibitem{V} Vieira, N. Fischer decomposition and Cauchy-Kovalevskaya extension in fractional Clifford analysis: the Riemann-Liouville case, {\em Proc. Edinb. Math. Soc. II.} 60, 1, {\bf  2017},  251--272.
	\bibitem{YQ} Yang, Y., Qian, T. Zeroes of slice monogenic functions. {\em Math. Methods Appl. Sci.} 34 {\bf  2011}, no. 11, 1398-1405.  
	\bibitem{XS} Xu, Z., Sabadini, I. Generalized partial-slice monogenic functions: a synthesis of two function theories. {\em Adv. Appl. Clifford Algebr.} 34 {\bf 2024}, no. 2, Paper No. 10, 14 pp. 	
\end{thebibliography}
\end{document}